\providecommand{\keywords}[1]
{
  \small	
  \textbf{\textit{Keywords---}} #1
}
\newtheorem{lemma}{Lemma}[section]
\newtheorem{theorem}[lemma]{Theorem}
\newtheorem{proposition}[lemma]{Proposition}
\newtheorem{corollary}[lemma]{Corollary}
\newtheorem{result}{Theorem}
\newtheorem{corollaryresult}[result]{Corollary}
\theoremstyle{definition}
\newtheorem{example}[lemma]{Example}
\newcommand{\abs}[1]{\ensuremath{\left| #1 \right|}}
\newcommand{\op}{\operatorname}
\newcommand{\ce}[2]{\operatorname{C}_{#1}(#2)}
\newcommand{\no}[2]{\operatorname{N}_{#1}(#2)}
\newcommand{\ze}[1]{\operatorname{Z}(#1)}
\newcommand{\rad}[2]{\op{O}_{#1}(#2)}
\newcommand{\syl}[2]{\op{Syl}_{#1}\left(#2\right)}
\newcommand{\hall}[2]{\op{Hall}_{#1}\left(#2\right)}
\newcommand{\irr}{\operatorname{Irr}}
\newcommand{\lin}{\operatorname{Lin}}
\newcommand{\B}[1]{\operatorname{B}_{#1}}
\newcommand{\X}[1]{\operatorname{X}_{#1}}
\newcommand{\cd}{\op{cd}}
\newcommand{\BBcd}[1]{\operatorname{cd}_{\operatorname{B}_{#1}}}%
\renewcommand{\phi}{\varphi}
\title{Character degrees in $\pi$-separable groups}
\date{\today}
\author{Nicola Grittini \footnote{Author's research is partially supported by INdAM - Istituto Nazionale di Alta Matematica F. Severi. The author also thanks professor Silvio Dolfi for his precious advices.}}
\affil{Università degli Studi di Firenze}
\begin{document}

\maketitle

\begin{abstract}
If a group $G$ is $\pi$-separable, where $\pi$ is a set of primes, the set of irreducible characters $\B{\pi}(G) \cup \B{\pi'}(G)$ can be defined. In this paper, we prove that there are variants of some classical theorems in character theory, namely the Theorem of Ito-Michler and Thompson theorem on character degrees, which involve irreducible characters in the set $\B{\pi}(G) \cup \B{\pi'}(G)$.

\keywords{character degrees, prime divisors, normal subgroups}
\end{abstract}

\section{Introduction}

The character theory of $\pi$-separable group was first introduced by Martin I. Isaacs in 1984 in a series of papers, the first of them being \cite{Is1}. In those papers, Isaacs introduced, for the $\pi$-separable group $G$, the family $\op{I}_{\pi}(G)$ of \textit{$\pi$-partial characters}, defined only on $\pi$-elements, and a family of irreducible lifts $\B{\pi}(G)$ for these characters. The aim of the paper was originally to generalize, for $\pi$-separable group, the concept of Brauer characters; in fact, if the group $G$ is $p$-solvable one has that $\op{I}_{p'}(G)=\op{IBr}_p(G)$.

In this paper, we study how the degrees of the $\B{\pi}$-characters (and, therefore, of the partial characters, too) influence the group structure.

%
%
%

In particular, we will focus on the degrees of the characters belonging to the set $\B{\pi}(G) \cup \B{\pi'}(G)$. This set is in general smaller then $\irr(G)$ and it is easy to find examples of a $\pi$-separable group $G$ and a character $\psi \in \irr(G)$ such that $\psi(1) \neq \chi(1)$ for any $\chi \in \B{\pi}(G) \cup \B{\pi'}(G)$. Nevertheless, in $\pi$-separable groups the degrees of the characters in $\B{\pi}(G) \cup \B{\pi'}(G)$ present some properties which are usually associated with the degrees of the characters in $\irr(G)$. In this paper, we present a version for the $\B{\pi}$-characters of two theorems about character degrees, which are sometimes considered as dual: the Theorem of Ito-Michler and the Thompson's Theorem on character degrees.

The famous Theorem of Ito-Michler says that a group $G$ has a normal abelian Sylow $p$-subgroup, for some prime $p$, if and only if $p$ does not divide the degree of any irreducible character of the group. If $G$ is $\pi$-separable, we see that there exists a version of the theorem involving only the characters in $\B{\pi}(G) \cup \B{\pi'}(G)$.

\begin{result}
\label{result:BpiItoMichler}
Let $G$ be a $\pi$-separable group and $p$ be any prime. Then $G$ has a normal abelian Sylow $p$-subgroup if and only if $p$ does not divide the degree of any character in $\B{\pi}(G) \cup \B{\pi'}(G)$.
\end{result}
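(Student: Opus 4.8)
The forward implication is elementary and uses neither $\pi$-separability nor the classification: if $P\in\syl{p}{G}$ is abelian and normal, then by Ito's theorem $\chi(1)$ divides $\abs{G:P}$ for every $\chi\in\irr(G)$, so $p\nmid\chi(1)$; and $\B{\pi}(G)\cup\B{\pi'}(G)\subseteq\irr(G)$.

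For the converse I would argue by induction on $\abs{G}$. Since a $\pi$-separable group is $\pi'$-separable and the set $\B{\pi}(G)\cup\B{\pi'}(G)$ is symmetric in $\pi$ and $\pi'$, we may relabel so that $p\in\pi$; thus assume $p\in\pi$, $G$ is $\pi$-separable, and $p\nmid\chi(1)$ for all $\chi\in\B{\pi}(G)\cup\B{\pi'}(G)$. Pick a minimal normal subgroup $N$ of $G$. Because inflation carries $\B{\pi}(G/N)$ into $\B{\pi}(G)$ and $\B{\pi'}(G/N)$ into $\B{\pi'}(G)$, the quotient $G/N$ inherits the hypothesis, so by induction $G/N$ has a normal abelian Sylow $p$-subgroup. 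I will also use two standard facts from Isaacs' theory of $\B{\pi}$-characters: for a $\pi$-group $M$ one has $\B{\pi}(M)=\irr(M)$; and for $M\trianglelefteq G$ every $\theta\in\B{\pi}(M)$ is a constituent of $\chi_M$ for some $\chi\in\B{\pi}(G)$ (and likewise with $\pi'$ in place of $\pi$).

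The argument then splits according to the isomorphism type of $N$. If $N$ is a $p$-group (hence elementary abelian), the preimage in $G$ of $\rad{p}{G/N}$ is a normal Sylow $p$-subgroup $P$ of $G$, and only abelianity is at stake: were $P$ nonabelian it would have a nonlinear $\theta\in\irr(P)=\B{\pi}(P)$, which, $P$ being normal, lies under some $\chi\in\B{\pi}(G)$ with $p\mid\theta(1)\mid\chi(1)$, a contradiction. If $N$ is a $p'$-group, then $PN\trianglelefteq G$ for $P\in\syl{p}{G}$, the group $P\cong PN/N$ is abelian, and the Frattini argument gives $G=N\,\no{G}{P}$. When $N$ is moreover abelian, $\ce{N}{P}$ and $[N,P]$ are both normal in $G$ and $N=\ce{N}{P}\times[N,P]$, so minimality of $N$ forces one factor to be trivial: if $[N,P]=1$ then $P$ is characteristic in $PN\trianglelefteq G$, hence normal; if instead $\ce{N}{P}=1$ then $P$ fixes no nontrivial irreducible character of $N$, so no Sylow $p$-subgroup of $G$ stabilizes a chosen $1\neq\theta\in\irr(N)$, whence $p\mid\abs{G:I_G(\theta)}$, and since $\theta$ lies in $\B{\pi}(N)$ or $\B{\pi'}(N)$ (according as the prime dividing $\abs{N}$ is in $\pi$ or $\pi'$) it lies under some $\chi$ in $\B{\pi}(G)$ or $\B{\pi'}(G)$ with $p\mid\abs{G:I_G(\theta)}\mid\chi(1)$, contradicting the hypothesis; so this subcase cannot occur. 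Finally, when $N=S^k$ is nonabelian, if $p\mid\abs{N}$ the hypothesis already fails via Ito--Michler applied to the simple composition factor $S$, and otherwise $N$ is a $p'$-group and a variant of the last argument --- taking $\theta\in\irr(N)$ nontrivial on a single simple direct factor, so that $I_G(\theta)$ normalizes that factor --- shows the hypothesis forces $P$ to centralize $N$, whence again $P$ is characteristic in $PN\trianglelefteq G$. In every case $G$ has a normal abelian Sylow $p$-subgroup, completing the induction.

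The conceptual heart of the proof --- and the step I expect to be the main obstacle --- is the treatment of chief factors on which the Sylow $p$-subgroup acts nontrivially. For abelian such factors this is exactly the Frobenius-type configuration (already visible in $G=S_3$ with $p=2\in\pi=\{2\}$) where the classical reduction ``$G$ has a normal Sylow $p$-subgroup iff $G/\rad{p'}{G}$ does'' fails and where the $\B{\pi}$-characters alone are blind to the obstruction, so that feeding the factor into the Clifford theory of the $\B{\pi'}$-characters is essential; for nonabelian factors one must combine this with Ito--Michler for the composition factors. Beyond the two $\B{\pi}$-facts quoted above, the only inputs are routine: Ito's theorem, coprime action, the Frattini argument, minimality of chief factors, and the behaviour of degrees under Clifford's theorem.
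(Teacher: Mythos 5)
Your forward direction is fine, and your overall strategy is genuinely different from the paper's: you analyse the minimal normal subgroup $N$ itself, case by case, in the style of the classical reduction for Ito--Michler, whereas the paper normalises so that $N$ is a $\pi$-group, splits on whether $p\in\pi$, and in the case $p\notin\pi$ never touches the structure of $N$ at all --- it passes to the centre $C/N$ of the $p$-chief factor above $N$ and invokes Lemma~\ref{lemma:vanishonA} (i.e.\ the regular-orbit result \cite[Lemma 2.8]{DPSS} for a faithful coprime action of an abelian group on a minimal normal subgroup) to manufacture a character in $\B{\pi}(G)\cup\B{\pi'}(G)$ whose degree is divisible by $\abs{C:N}$. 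Your cases ``$N$ a $p$-group'' and ``$N$ an abelian $p'$-group'' are correct as written.

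The gap is in the case $N=S_1\times\dots\times S_k$ nonabelian with $p\nmid\abs{N}$. What your Clifford-theoretic argument actually yields is: for every $\theta\in\irr(N)$ one has $p\nmid\abs{G:I_G(\theta)}$, i.e.\ $I_G(\theta)$ contains \emph{some} Sylow $p$-subgroup of $G$; equivalently, every $G$-orbit on $\irr(N)$ has $p'$-length. You then assert that this forces a \emph{fixed} $P\in\syl{p}{G}$ to centralise $N$. In the abelian case that deduction is legitimate, because minimality makes $\ce{N}{P}$ equal to $1$ or $N$, and $\ce{N}{P}=1$ gives, via $\abs{\ce{\irr(N)}{P}}=\abs{\ce{N}{P}}$, that \emph{no} conjugate of $P$ fixes any nontrivial $\theta$ --- a genuine contradiction. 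In the nonabelian case $\ce{N}{P}$ is not normal in $G$, the dichotomy disappears, and ``each $\theta$ is fixed by some conjugate of $P$'' is strictly weaker than ``$P$ fixes every $\theta$'': a $p$-subgroup need not act trivially on a set all of whose orbits under the full acting group have $p'$-length unless that $p$-subgroup is suitably normal in the acting group. Taking $\theta$ supported on a single simple factor only shows that $\no{G}{S_1}$ contains a Sylow $p$-subgroup of $G$ and that each such $\theta$ is fixed by some Sylow $p$-subgroup inside $\no{G}{S_1}$, which may vary with $\theta$. To close this you would need either a regular-orbit (or at least ``some orbit of length divisible by $p$'') statement for the abelian $p$-group acting coprimely on $\irr(N)$ with $N$ minimal normal --- which is precisely what the paper imports from \cite{DPSS} via Lemma~\ref{lemma:vanishonA} --- or a classification-dependent analysis of $\op{Out}(S)$ for primes $p\nmid\abs{S}$; it is not the routine coprime-action step your sketch suggests.
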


Of course, there is an easy corollary following from this.

\begin{corollaryresult}
Let $G$ be a $\pi$-separable group and let $p$ be any prime. Then, $p$ divides the degree of some characters in $\irr(G)$ if and only if it divides the degree of some characters in $\B{\pi}(G) \cup \B{\pi'}(G)$.
\end{corollaryresult}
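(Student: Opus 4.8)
The plan is to obtain this statement as an immediate consequence of Theorem~\ref{result:BpiItoMichler} together with the classical Theorem of Ito--Michler, since the two results have literally the same left-hand side in their biconditionals. Concretely, Theorem~\ref{result:BpiItoMichler} asserts that $G$ has a normal abelian Sylow $p$-subgroup if and only if $p$ divides the degree of no character in $\B{\pi}(G) \cup \B{\pi'}(G)$, while the classical Ito--Michler theorem asserts that $G$ has a normal abelian Sylow $p$-subgroup if and only if $p$ divides the degree of no character in $\irr(G)$. Chaining these two equivalences identifies the two ``$p$ divides no degree'' conditions, and negating both sides yields exactly the claimed equivalence of the two ``$p$ divides some degree'' conditions.

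If one prefers a slightly more hands-on write-up, I would split into the two implications. For the ``if'' direction I would simply note that, by construction, the characters in $\B{\pi}(G)$ and in $\B{\pi'}(G)$ are genuine irreducible characters of $G$, so $\B{\pi}(G) \cup \B{\pi'}(G) \subseteq \irr(G)$; hence if $p \mid \chi(1)$ for some $\chi \in \B{\pi}(G) \cup \B{\pi'}(G)$, then certainly $p \mid \chi(1)$ for some $\chi \in \irr(G)$, and this half needs nothing beyond the definition. For the converse I would argue by contraposition: assume $p$ divides the degree of no character in $\B{\pi}(G) \cup \B{\pi'}(G)$; then Theorem~\ref{result:BpiItoMichler} gives a normal abelian Sylow $p$-subgroup of $G$, and the classical Ito--Michler theorem then forces $p \nmid \chi(1)$ for every $\chi \in \irr(G)$, which is the contrapositive of the ``only if'' direction.

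There is no real obstacle here, as this is a corollary of Theorem~\ref{result:BpiItoMichler}; the only point meriting a line of care is the remark that $\B{\pi}$- and $\B{\pi'}$-characters genuinely lie in $\irr(G)$, which makes one of the two implications entirely trivial and lets the other be extracted from the combination of Theorem~\ref{result:BpiItoMichler} and the classical result.
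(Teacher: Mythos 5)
Your proposal is correct and is exactly the argument the paper intends: it presents this as an immediate corollary of Theorem~\ref{result:BpiItoMichler} combined with the classical Ito--Michler theorem (or, for the trivial direction, the containment $\B{\pi}(G)\cup\B{\pi'}(G)\subseteq\irr(G)$), and gives no further proof. Nothing is missing.
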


%
%
%
%

The well known Thompson Theorem on character degrees affirms that, if a prime $p$ divides the degree of every nonlinear irreducible character of a group $G$, then $G$ has a normal $p$-complement.

In \cite{NW}, Navarro and Wolf studied a variant of the theorem which involves more then one prime. Let $\irr_{\pi'}(G)$ be the set of irreducible characters which degree is not divided by any prime in $\pi$. To ask that $p$ divides the degree of every irreducible nonlinear character of $G$ is equivalent to ask that $\irr_{p'}(G)=\lin(G)$. In \cite[Corollary 3]{NW}, Navarro and Wolf considered more than one prime and proved that, if $G$ is a $\pi$-separable group and $H$ is a Hall $\pi$-subgroup, then $\irr_{\pi'}(G)=\lin(G)$ if and only if $G' \cap \no{G}{H}=H'$.

In this paper, a first result establishes an equivalence between the condition on character degrees studied in the aforementioned \cite[Corollary 3]{NW} and the same condition restricted to the set of characters $\B{\pi}(G) \cup \B{\pi'}(G)$.

\begin{result}
\label{result:thompson1}
Let $G$ be a $\pi$-separable group. Then, $\irr_{\pi'}(G)=\lin(G)$ if and only if $\irr_{\pi'}(G) \cap \big(\B{\pi}(G) \cup \B{\pi'}(G) \big) \subseteq \lin(G)$.
\end{result}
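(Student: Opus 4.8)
The plan is to prove both implications, with the forward direction being essentially immediate and the reverse direction carrying all the weight. If $\irr_{\pi'}(G) = \lin(G)$, then since $\B{\pi}(G) \cup \B{\pi'}(G) \subseteq \irr(G)$, intersecting with $\irr_{\pi'}(G)$ gives a set contained in $\lin(G)$, so the forward implication is trivial.

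For the converse, I would assume $\irr_{\pi'}(G) \cap \bigl(\B{\pi}(G) \cup \B{\pi'}(G)\bigr) \subseteq \lin(G)$ and aim to show $\irr_{\pi'}(G) = \lin(G)$, i.e.\ that every character in $\irr(G)$ whose degree is a $\pi'$-number is linear. The natural route is induction on $|G|$, reducing to a minimal counterexample. A key structural fact is that a character $\chi \in \irr(G)$ with $\chi(1)$ a $\pi'$-number is ``close to'' a $\B{\pi'}$-character: one would want to use the theory relating $\irr(G)$ to $\B{\pi}(G) \cup \B{\pi'}(G)$ — in particular, when $\chi(1)$ is a $\pi'$-number, $\chi$ should lie over (or restrict compatibly to) a character in $\B{\pi'}(G)$ of the same or related degree, via the Fong–Swan/Isaacs machinery for $\pi$-separable groups. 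The cleanest handle is presumably: if $\chi \in \irr_{\pi'}(G)$ is nonlinear, produce a nonlinear character in $\irr_{\pi'}(G) \cap \B{\pi'}(G)$ (or in the $\B{\pi}$-part), contradicting the hypothesis. This likely proceeds by taking a normal subgroup $N$ (e.g.\ a minimal normal subgroup, which is either a $\pi$-group or a $\pi'$-group by $\pi$-separability), using Clifford theory together with the behaviour of $\B{\pi}$-characters under induction and restriction to normal subgroups, and invoking the inductive hypothesis on $G/N$ or on stabilizers.

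I would organize the reverse direction as follows. First, record that it suffices to show every $\chi \in \irr(G)$ with $\chi(1) \in \pi'$-numbers is linear, and set up a minimal counterexample $(G,\chi)$. Second, choose a chief factor / minimal normal subgroup $N$; split into the cases $N$ a $\pi$-group and $N$ a $\pi'$-group, handling the vanishing on quotients (if $N \le \ker\chi$, pass to $G/N$ and use that $\B{\pi}(G/N) \cup \B{\pi'}(G/N)$ sits inside the $G$-version, so the hypothesis descends) and the faithful-on-$N$ situation separately. Third, in the faithful case use Clifford theory: $\chi$ lies over some $\theta \in \irr(N)$; analyze the inertia group $T = I_G(\theta)$ and the Clifford correspondent $\psi \in \irr(T\mid\theta)$ with $\chi = \psi^G$, so $\chi(1) = |G:T|\psi(1)$, and both factors are $\pi'$-numbers. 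Fourth, translate back to $\B{\pi}$-characters: use that a character of $\pi'$-degree lying over an appropriate normal character corresponds to a $\B{\pi'}$-character — this is where I'd lean on Isaacs's results that for $\pi$-separable $G$, the $\pi'$-degree irreducible characters are controlled by $\B{\pi'}(G)$, and that $\lin(G) \subseteq \B{\pi}(G) \cap \B{\pi'}(G)$ (linear characters are both $\B{\pi}$ and $\B{\pi'}$).

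The main obstacle I anticipate is the precise passage between an arbitrary $\chi \in \irr_{\pi'}(G)$ and a character in $\B{\pi'}(G)$: it is \emph{not} true that every $\pi'$-degree irreducible character lies in $\B{\pi'}(G)$ (the introduction itself warns that $\B{\pi}(G)\cup\B{\pi'}(G)$ can miss degrees occurring in $\irr(G)$), so the argument cannot simply be ``degree lies in $\pi'$, hence it's a $\B{\pi'}$-character.'' Instead I expect one must show that the \emph{existence} of a nonlinear $\pi'$-degree irreducible character forces the existence of a nonlinear one that \emph{is} a $\B{\pi'}$-character (or $\B{\pi}$-character), presumably by a careful inductive argument on a vertex/nucleus pair or by restricting to a Hall subgroup and using the Navarro–Wolf criterion $G' \cap \no{G}{H} = H'$ from \cite[Corollary 3]{NW} as an intermediate equivalent condition. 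Reconciling the $\B{\pi}$-theory with that normalizer-of-Hall-subgroup condition — essentially showing the two ``smallness'' conditions on character degrees coincide — is the technical heart of the proof.
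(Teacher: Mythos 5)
Your forward direction is fine, but the reverse direction is only a plan, and the plan stops exactly where the proof has to happen: you yourself identify ``reconciling the $\B{\pi}$-theory with the normalizer-of-Hall-subgroup condition'' as the technical heart and then do not supply it. Concretely, you never give a mechanism that takes an arbitrary nonlinear $\chi\in\irr_{\pi'}(G)$ and extracts from it a nonlinear character lying in $\B{\pi}(G)\cup\B{\pi'}(G)$ of $\pi'$-degree. The Clifford-theoretic scheme you sketch (minimal normal subgroup, inertia group, induction on $|G|$) does not obviously close: the hypothesis descends to quotients easily enough, but in the faithful case the Clifford correspondent lives in a proper subgroup $T=I_G(\theta)$, where the hypothesis on $\B{\pi}(G)\cup\B{\pi'}(G)$ gives you no control, and you offer no way to return to $G$. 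Your proposed fallback --- invoking the Navarro--Wolf criterion $G'\cap\no{G}{H}=H'$ as an intermediate equivalence --- would also be circular in the context of this paper, which derives that criterion \emph{from} this result rather than the other way around.

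The paper's actual argument is short, global, and avoids induction entirely. The two ingredients are: (i) a lemma on Fong characters showing that $\irr_{\pi'}(G)\cap\B{\pi}(G)\subseteq\lin(G)$ is equivalent to the statement that every linear character of a Hall $\pi$-subgroup $H$ extends to $G$ (this uses that linear, hence primitive, characters of $H$ are Fong characters of $\B{\pi}$-characters of $\pi'$-degree); and (ii) the Isaacs--Navarro factorization (Theorem 3.6 of [IN]): any $\chi\in\irr_{\pi'}(G)$ is $(\alpha\beta)^G$ with $H\le W\le G$, $\alpha\in\X{\pi}(W)$ linear, $\beta\in\X{\pi'}(W)$, and $W$ the \emph{maximal} subgroup to which $\alpha_H$ extends. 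By (i), $\alpha_H$ extends to all of $G$, so $W=G$ and $\chi=\alpha\beta$; if $\chi$ is nonlinear then $\beta$ is a nonlinear character in $\X{\pi'}(G)=\irr_{\pi'}(G)\cap\B{\pi'}(G)$, contradicting the other half of the hypothesis. To repair your write-up you would need to prove (i) and then replace your Clifford-theoretic reduction with (ii), or else find a genuinely different route; as it stands the proposal has a gap at its central step.
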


%
%
%

Afterwards, the paper focuses on variants of Thompson's Theorem considering only $\B{\pi}$-characters or $\B{\pi'}$-characters.

\begin{result}
\label{result:thompsonpipiprime}
Let $G$ be a $\pi$-separable group, let $H$ be a Hall $\pi$-subgroup for $G$ and let $N=\no{G}{H}$. Then,
\begin{itemize}
\item[a)] $\irr_{\pi'}(G) \cap \B{\pi}(G) \subseteq \lin(G)$ if and only if $G' \cap H = H'$;
\item[b)] $\irr_{\pi'}(G) \cap \B{\pi'}(G) \subseteq \lin(G)$ if and only if $G' \cap N \leq H$.
\end{itemize}

\end{result}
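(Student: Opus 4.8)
The plan is to convert each of the two conditions on character degrees into a condition on the structure near a Hall $\pi$-subgroup and then to match it with the stated subgroup relation. Two identifications do all the work. First, $\irr_{\pi'}(G)\cap\B{\pi'}(G)$ equals the set $\X{\pi'}(G)$ of $\pi'$-special irreducible characters of $G$: a $\B{\pi'}$-character has $\pi'$-degree exactly when it is $\pi'$-special (Isaacs' theory of $\B{\pi}$-characters; cf.\ \cite{Is1}). Second, and dually, a character $\chi\in\B{\pi}(G)$ has $\pi'$-degree exactly when $\chi_H$ has a linear constituent, and the $N$-orbit of such a constituent is a complete invariant of $\chi$; this sets up a bijection between $\irr_{\pi'}(G)\cap\B{\pi}(G)$ and the set of $N$-orbits on $\lin(H)$, under which $\chi$ is linear precisely when the linear characters of $H$ in the corresponding orbit are restrictions of linear characters of $G$. (I would isolate this second identification as a lemma, proved from Isaacs' machinery for $\B{\pi}$-characters.)

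Granting these, part a) is short. By the second identification, $\irr_{\pi'}(G)\cap\B{\pi}(G)\subseteq\lin(G)$ is equivalent to: every $\lambda\in\lin(H)$ extends to a linear character of $G$. A direct computation then finishes. For $\mu\in\lin(G)=\irr(G/G')$, the restriction $\mu_H$ is trivial on $G'\cap H$, so the image of $\lin(G)\to\lin(H)$ lies in $\irr\!\big(H/(G'\cap H)\big)$; conversely every character of the abelian group $G'H/G'\cong H/(G'\cap H)$ extends to $G/G'$, so the image is exactly $\irr\!\big(H/(G'\cap H)\big)$. This coincides with $\lin(H)=\irr(H/H')$ if and only if $G'\cap H=H'$. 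Hence $\irr_{\pi'}(G)\cap\B{\pi}(G)\subseteq\lin(G)$ if and only if $G'\cap H=H'$.

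Part b) is more delicate and, I expect, the main content of the proof. Here one works with $\X{\pi'}(G)$. Every $\pi'$-special character is trivial on $\rad{\pi}{G}$, and the condition $G'\cap N\le H$ passes to $\bar G=G/\rad{\pi}{G}$ — here $\rad{\pi}{G}\le H\le N$, $\bar H\in\hall{\pi}{\bar G}$, $\bar N=\no{\bar G}{\bar H}$ and $\bar G'\cap\bar N=\overline{G'\cap N}$ — so one may assume $\rad{\pi}{G}=1$, whence $\ce{G}{\rad{\pi'}{G}}\le\rad{\pi'}{G}$. In this situation I would induct on $|G|$ and analyze a $\pi'$-special character $\chi$ by Clifford theory over $V=\rad{\pi'}{G}$: $\chi$ lies over a $G$-orbit of $\pi'$-special characters of $V$, its inertial and multiplicity data are themselves constrained by $\pi'$-specialness, and the demand that \emph{every} $\pi'$-special character be linear then forces the $\pi$-part of $G$ to act on $\irr(V)$, and to sit over $V$ inside $G$, exactly in the way encoded by $G'\cap N\le H$ (non-faithful $\chi$ being handled by the inductive hypothesis applied to a proper quotient). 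The subtlety is that the condition produced must be $G'\cap N\le H$, not the formally weaker $N'\le H$: for $G=\op{SL}_2(3)$ with $\pi=\{3\}$ one has $\rad{\pi}{G}=1$ and $N/H$ abelian (so $N'\le H$), yet $G'\cap N=\ze{G}\not\le H$, and $G$ carries a nonlinear $\pi'$-special character — so the bookkeeping must retain $G'$, not just $N'$.

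Thus the main obstacle is the ``local dictionary'' in both parts: for a), proving the $\B{\pi}$-side identification sharply — since $H$ is not normal, one must control the $G$-action on $\lin(H)$ and check that a non-extendible $\lambda\in\lin(H)$ genuinely forces a nonlinear $\B{\pi}$-character of $\pi'$-degree; for b), carrying the $\pi'$-special Clifford analysis through to the exact inclusion $G'\cap N\le H$ and making the induction descend correctly. As a consistency check, a) and b) together say that the hypothesis of Result~\ref{result:thompson1} is equivalent to ``$G'\cap H=H'$ and $G'\cap N\le H$'', i.e.\ to $G'\cap N=H'$; by the Navarro--Wolf criterion \cite[Corollary 3]{NW} this is precisely $\irr_{\pi'}(G)=\lin(G)$, so the present statement refines, and is compatible with, both Result~\ref{result:thompson1} and \cite{NW}.
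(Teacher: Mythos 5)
Your part a) is essentially the paper's argument. The identification of $\irr_{\pi'}(G)\cap\B{\pi}(G)$ with ($N$-orbits of) linear characters of $H$, via Fong characters and their uniqueness (Theorems~\ref{theorem:fongchar} and \ref{theorem:primitivefongchar}, plus the existence of a linear $\pi$-special extension from \cite[Theorem F]{Is3}), is exactly the paper's Lemma~\ref{lemma:extensionoflinears}, and your kernel computation matching extendibility of $\lin(H)$ with $G'\cap H=H'$ is the same as in Proposition~\ref{proposition:2}. This half is fine.

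Part b), however, is not a proof but a declaration of intent, and the strategy you sketch is missing the one ingredient that actually makes the statement work. You correctly reduce to the claim that $\X{\pi'}(G)\subseteq\lin(G)$ iff $G'\cap N\le H$, but your proposed Clifford-theoretic induction over $V=\rad{\pi'}{G}$ is described only as ``the demand \dots forces the $\pi$-part of $G$ to act \dots exactly in the way encoded by $G'\cap N\le H$,'' with the hard bookkeeping (retaining $G'$ rather than $N'$) explicitly flagged as unresolved. The paper does something quite different: it invokes Wolf's relative McKay-type theorem \cite[Theorem 1.15]{W}, in the form $\abs{\X{\pi'}(G)}=\abs{\X{\pi'}(N)}=\abs{\irr(N/H)}$ (Corollary~\ref{corollary:wolf}), and this counting identity carries \emph{both} directions. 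In the forward direction, linearity of all $\pi'$-special characters makes restriction to $N$ an injection of $\X{\pi'}(G)$ into $\irr(N/H)$, and the count upgrades it to a bijection, so that $G'\cap N$ lies in the intersection of the kernels of all of $\irr(N/H)$, which is $H$. In the converse direction, $G'\cap N\le H$ together with $G=NG'$ produces $\abs{\irr(N/H)}$ linear $\pi'$-special characters, and only the count $\abs{\X{\pi'}(G)}=\abs{\irr(N/H)}$ lets you conclude there are no others. Your outline supplies neither a substitute for this global count nor a mechanism by which a local Clifford analysis at $\rad{\pi'}{G}$ would ever see the normalizer $N$ of a (non-normal) Hall $\pi$-subgroup; without that, the inductive step cannot produce the inclusion $G'\cap N\le H$. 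So part b) as proposed is a genuine gap: you need Wolf's theorem (or an equivalent bijection/count between $\X{\pi'}(G)$ and $\irr(N/H)$) as the key lemma.
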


%
%

The reader may notice that this last result are actually strongly related with \cite[Corollary 3]{NW}. In fact, since all the proofs presented here are independent from \cite[Corollary 3]{NW}, Results \ref{result:thompson1} and \ref{result:thompsonpipiprime} can provide an alternative, even if not shorter, proof of it.

\section{Review of the $\pi$-theory}

In this section, we are going to recall briefly some essential concepts of the character theory of $\pi$-separable group. In \cite{Is1} the reader can find a more extensive one.

At first, it is needed to define another subset of the irreducible characters, the \textit{$\pi$-special characters}.

\begin{def}\emph{\cite{G}}
Let $\chi \in \irr(G)$, then $\chi$ it is said to be a \textit{$\pi$-special character} if, for any $M \leq G$ subnormal in $G$, every irreducible constituent of $\chi_M$ has order and degree which are both $\pi$-numbers.
\end{def}

We call $\X{\pi}(G)$ the set of all the $\pi$-special character of the group $G$.

The concept of $\pi$-special character is essential for further developments of the theory and for the definition of the set $\B{\pi}$-characters.

\begin{theorem}[\emph{\cite{Is1}}]
\label{theorem:nucleus}
Let $G$ be a $\pi$-separable group and let $\chi \in \irr(G)$. Then there exists a subgroup $W \leq G$, canonically defined up to conjugation, $\alpha \in \X{\pi}(W)$ and $\beta \in \X{\pi'}(W)$ such that $\chi=(\alpha\beta)^G$. If $\beta=1_W$, then the character $\chi$ is a $\B{\pi}$-character.
\end{theorem}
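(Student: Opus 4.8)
The plan is to reconstruct Isaacs' construction from \cite{Is1}: to produce the subgroup $W$ together with the factorization $\gamma := \alpha\beta$, and then to settle the $\B{\pi}$ clause.

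I would first collect the facts about $\pi$-special characters from \cite{G}. If $W$ is $\pi$-separable, $\alpha \in \X{\pi}(W)$ and $\beta \in \X{\pi'}(W)$, then $\alpha\beta \in \irr(W)$; moreover, if $\alpha\beta = \alpha_1\beta_1$ with $\alpha_1 \in \X{\pi}(W)$ and $\beta_1 \in \X{\pi'}(W)$, then $\alpha = \alpha_1$ and $\beta = \beta_1$. So, calling $\gamma \in \irr(W)$ \emph{factorable} when it is of this form, the pair $(\alpha,\beta)$ is an intrinsic invariant of a factorable $\gamma$, and once $W$ is pinned down up to conjugacy so are $\alpha$ and $\beta$; this already reduces the theorem to producing a canonical $G$-conjugacy class of factorable pairs $(W,\gamma)$ with $\gamma^G = \chi$. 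I would also want the extension lemmas: a $G$-invariant $\alpha \in \X{\pi}(N)$ with $N \trianglelefteq G$ has a unique $\pi$-special extension to $G$ when $G/N$ is a $\pi$-group, and — the more delicate ``canonical extension'' of \cite{Is1} — also a unique $\pi$-special extension when $G/N$ is a $\pi'$-group; together with the mirror statements for $\pi'$-special characters, these drive the ``going up'' steps.

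Then I would build $(W,\gamma)$ by downward recursion on $|G|$. If $\chi$ is already factorable, take $W = G$, $\gamma = \chi$, and stop. Otherwise $G > 1$ and, by $\pi$-separability, $\rad{\pi}{G}$ or $\rad{\pi'}{G}$ is nontrivial; let $L$ be $\rad{\pi}{G}$ if that is nontrivial and $\rad{\pi'}{G}$ otherwise, and let $\theta \in \irr(L)$ lie under $\chi$ (automatically $\pi$-special, resp.\ $\pi'$-special, and determined up to $G$-conjugacy). If the inertia group $T = I_G(\theta)$ is proper, take the Clifford correspondent $\psi \in \irr(T \mid \theta)$, so $\psi^G = \chi$, apply the recursion to $\psi$ in $T$ to obtain $(W,\gamma)$, and observe $\gamma^G = (\gamma^T)^G = \psi^G = \chi$. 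If instead $\theta$ is $G$-invariant, I would use the canonical-extension theory to separate off the $\pi$- (resp.\ $\pi'$-) special content of $\chi$ lying over $\theta$ and recurse on what remains in a strictly smaller group, then recombine the $\pi$-special parts; this invariant case is where the extension lemmas do their work and where most of the bookkeeping lives. Checking that the recursion terminates and that the final $\gamma$ is genuinely factorable is routine but long.

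The real difficulty is \emph{canonicity}: $(W,\gamma)$ must be shown independent, up to $G$-conjugacy, of the choices of $\theta$ and $\psi$ made along the way. I would do this as in \cite{Is1}, by isolating an intrinsic description of the nucleus — namely that, up to conjugacy, $(W,\gamma)$ is the unique factorable pair with $\gamma^G = \chi$ satisfying a suitable minimality/compatibility condition relative to a chief series of $G$ — so that any two runs of the construction produce conjugate pairs. Finally, for the $\B{\pi}$ clause I would trace the map $\chi \mapsto \chi^0$ (restriction to $\pi$-elements, i.e.\ passage to the associated $\pi$-partial character) through each recursion step: Clifford induction from an inertia group and multiplication by a canonical $\pi$-special extension are compatible with the passage to $\op{I}_{\pi}(G)$, whereas multiplying by a nontrivial $\pi'$-special character changes the $\pi$-partial character. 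Since $\B{\pi}(G)$ is exactly the family of canonical irreducible lifts of $\op{I}_{\pi}(G)$, this shows that $\chi \in \B{\pi}(G)$ precisely when the $\pi'$-special factor $\beta$ that survives the construction is trivial, which yields the stated implication (and its converse as well).
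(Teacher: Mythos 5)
The paper does not prove this statement: it is quoted from \cite{Is1}, where the construction of the nucleus $(W,\gamma)$ with $\gamma=\alpha\beta$ occupies several sections and where the clause ``$\beta=1_W$ implies $\chi\in\B{\pi}(G)$'' is essentially the \emph{definition} of $\B{\pi}(G)$. So your proposal can only be measured against Isaacs' original argument, whose general shape --- descend through Clifford correspondents over factorable characters of characteristic normal subgroups until the character itself factors, then prove canonicity --- you have reproduced correctly.

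Two of your intermediate claims are nonetheless wrong or missing. First, your extension lemma in the direction ``$G/N$ a $\pi$-group'' is false as stated: a $G$-invariant $\alpha\in\X{\pi}(N)$ need not extend at all when $G/N$ is a $\pi$-group (take $G$ extraspecial of order $p^3$, $N=\ze{G}$, $\pi=\{p\}$ and $\alpha$ faithful), and when it does extend the $\pi$-special extension is not unique, since the extensions differ by linear characters of the $\pi$-group $G/N$ and all of them are again $\pi$-special. The genuine canonical-extension theorem goes only in the coprime direction ($G/N$ a $\pi'$-group, $\alpha$ $\pi$-special), and the construction has to be arranged so that only that case is invoked. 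Second, and more seriously, your treatment of the invariant case is vacuous: if $\theta$ is $G$-invariant there is no ``strictly smaller group'' to recurse into, and the real content of Isaacs' argument at exactly this point is a lemma showing that if $\chi$ does not factor, then a suitable factorable character of a suitable characteristic subgroup (Isaacs works with subgroups such as $\radd{\pi}{G}$ rather than just $\rad{\pi}{G}$) lying under $\chi$ has a \emph{proper} stabilizer; this is the step that makes the recursion terminate in a factored pair, and it is the step your sketch does not supply. Canonicity and the $\B{\pi}$ clause are likewise deferred to \cite{Is1}, which is acceptable for a quoted background theorem, but it means the proposal is an outline of where the proof lives rather than a proof.
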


The relation between $\B{\pi}$ and $\pi$-special character is actually even stronger.

\begin{theorem}[\emph{\cite[Lemma 5.4]{Is1}}]
\label{prop:bpiespec}
Let $\chi \in \B{\pi}(G)$, then $\chi$ is $\pi$-special if and only if $\chi(1)$ is a $\pi$-number.
\end{theorem}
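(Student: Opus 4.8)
The forward implication is immediate from the definition: if $\chi$ is $\pi$-special, then taking the subnormal subgroup to be $G$ itself shows that $\chi(1)$ is a $\pi$-number.

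For the converse, assume $\chi \in \B{\pi}(G)$ and that $\chi(1)$ is a $\pi$-number; the plan is to prove that the nucleus of $\chi$ is all of $G$. By Theorem~\ref{theorem:nucleus}, since $\chi \in \B{\pi}(G)$ we may write $\chi = \alpha^G$ with $W \le G$ the canonical nucleus and $\alpha \in \X{\pi}(W)$. Then $\chi(1) = |G:W|\,\alpha(1)$, and $\alpha(1)$ is a $\pi$-number because $\alpha$ is $\pi$-special; so the hypothesis forces $|G:W|$ to be a $\pi$-number as well. The complementary fact, that for a $\B{\pi}$-character the index $|G:W|$ of its nucleus is a $\pi'$-number, is part of the structure theory of \cite{Is1}, and it is precisely the ingredient that carries the weight of the argument. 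Granting it, $|G:W|=1$, so $W=G$ and $\chi = \alpha \in \X{\pi}(G)$, as required.

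If instead one wants to avoid quoting that index statement, the converse can be proved by induction on $|G|$. If $G$ is a $\pi$-group the claim is trivial, every irreducible character of a $\pi$-group being $\pi$-special. If $L := \rad{\pi'}{G} \neq 1$, then every irreducible constituent of $\chi_L$ lies in $\B{\pi}(L)$, which is $\{1_L\}$ because $L$ is a $\pi'$-group; hence $L \leq \ker\chi$, and one descends to $G/L$, using that both being a $\B{\pi}$-character and being $\pi$-special are inherited by and from quotients by a normal $\pi'$-subgroup. The remaining case $\rad{\pi'}{G}=1$ with $K := \rad{\pi}{G} \neq 1$ is the delicate one: one takes an irreducible constituent $\theta$ of $\chi_K$ — automatically $\theta \in \X{\pi}(K)$, since $K$ is a $\pi$-group — and studies it together with its inertia subgroup $I_G(\theta)$ through Clifford theory. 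Controlling this Clifford reduction over the $\pi$-radical, in particular what induction from the inertia subgroup does to $\pi$-specialness, is where the main obstacle lies; it is essentially the same difficulty seen from the other side.
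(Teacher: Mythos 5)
First, a point of reference: the paper does not prove this statement at all --- it is imported verbatim from \cite[Lemma 5.4]{Is1} and used as a black box --- so there is no proof here to measure yours against; your attempt has to stand on its own. The forward implication you give is correct (and does not even use $\chi \in \B{\pi}(G)$): taking $M=G$ in the definition of $\pi$-special forces $\chi(1)$ to be a $\pi$-number.

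The converse, however, has a genuine gap in both routes you offer. The first route rests entirely on the assertion that the nucleus $W$ of an arbitrary $\B{\pi}$-character has $\pi'$-index in $G$, which you neither prove nor locate precisely in \cite{Is1}. Observe that this assertion is strictly \emph{stronger} than the lemma you are proving: the lemma only yields $\abs{G:W}=1$ in the special case where $\chi(1)$ is a $\pi$-number and says nothing about $\abs{G:W}_{\pi}$ otherwise, whereas your quoted fact pins down $\abs{G:W}_{\pi}=1$ (equivalently $\alpha(1)=\chi(1)_{\pi}$, equivalently that $W$ contains a Hall $\pi$-subgroup of $G$ up to conjugacy) for \emph{every} $\chi\in\B{\pi}(G)$. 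It is not among the facts the paper recalls in its review section, and the existence of Theorem~\ref{theorem:maximalnucleus} --- where Isaacs and Navarro must construct a different, maximal inducing pair precisely in order to place a Hall $\pi$-subgroup inside $W$, and only for characters of $\pi'$-degree --- should make you wary of treating the position of the canonical nucleus relative to Hall $\pi$-subgroups as a freely quotable triviality. Deriving a lemma by invoking an unproved, unlocated, strictly stronger claim from the very source being credited with the lemma is a reduction, not a proof. The second route fares no better: after the easy reductions ($G$ a $\pi$-group; $\rad{\pi'}{G}\neq 1$), you arrive at the configuration $\rad{\pi'}{G}=1$ with $\rad{\pi}{G}\neq 1$ and a Clifford analysis over $\rad{\pi}{G}$, and you say yourself that this ``is where the main obstacle lies.'' That obstacle is the entire content of the statement, so nothing has been proved. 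Either carry out that induction in full --- which amounts to redoing a nontrivial piece of \cite{Is1} --- or simply cite \cite[Lemma 5.4]{Is1}, as the paper does.
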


Moreover, if $\chi \in \irr_{\pi'}(G)$ there exists a stronger version of Theorem~\ref{theorem:nucleus}, due to Isaacs and Navarro.

\begin{theorem}[\emph{\cite[Theorem 3.6]{IN}}]
\label{theorem:maximalnucleus}
Let $G$ be a $\pi$-separable group, $H \in \hall{\pi}{G}$ and let $\chi \in \irr_{\pi'}(G)$. Then there exists a subgroup $H \leq W \leq G$, $\alpha \in \X{\pi}(W)$ linear and $\beta \in \X{\pi'}(W)$ such that $\chi=(\alpha\beta)^G$. Moreover, $W$ can be chosen as the (unique) maximal subgroup of $G$ such that $\alpha_H$ extends to $W$.
\end{theorem}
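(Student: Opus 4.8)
The plan is to obtain the existence assertion essentially for free from Theorem~\ref{theorem:nucleus} by a degree count, and then to establish the final clause of the statement by an induction on $|G|$.

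For the existence part, apply Theorem~\ref{theorem:nucleus} to $\chi$: there are a subgroup $W_0\le G$ (canonical up to conjugacy), $\alpha_0\in\X{\pi}(W_0)$ and $\beta_0\in\X{\pi'}(W_0)$ with $\chi=(\alpha_0\beta_0)^G$. Taking degrees, $\chi(1)=|G:W_0|\,\alpha_0(1)\,\beta_0(1)$, and since $\chi\in\irr_{\pi'}(G)$ each of the three factors is a $\pi'$-number. But $\alpha_0$ is $\pi$-special, so $\alpha_0(1)$ is also a $\pi$-number, whence $\alpha_0(1)=1$; and $|G:W_0|$ being a $\pi'$-number forces $|W_0|_\pi=|G|_\pi$, so a Hall $\pi$-subgroup of $W_0$ (which exists, as $W_0$ is $\pi$-separable) is a Hall $\pi$-subgroup of $G$. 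Since all such are conjugate in $G$, after replacing $(W_0,\alpha_0,\beta_0)$ by a $G$-conjugate we may assume $H\le W_0$, and then $W=W_0$, $\alpha=\alpha_0$, $\beta=\beta_0$ already witness the first assertion.

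The substance of the theorem is therefore the final clause. Put $\lambda=\alpha_H\in\lin(H)$. I would first record that a linear $\pi$-special character of any intermediate subgroup $H\le V\le G$ is determined by its restriction to $H$: such a character has $\pi$-order, hence factors through $V/\op{O}^{\pi}(V)V'$, and $H$ maps onto that quotient because $V=\op{O}^{\pi}(V)H$ (here $H$ is a Hall $\pi$-subgroup of $V$ and $\op{O}^{\pi}(V)\trianglelefteq V$). In particular $\alpha$ is the unique linear $\pi$-special extension of $\lambda$ to $W_0$, and $\lambda$ certainly extends to $W_0$. The claim to prove is then that the $G$-conjugate of $W_0$ containing $H$ is precisely the maximal subgroup, among the $V$ with $H\le V\le G$, to which $\lambda$ extends — in particular that there is a unique such maximal subgroup. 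Equivalently: whenever $H\le V\le G$ and $\lambda$ extends to $V$, one has $V\le W_0$.

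To prove this I would induct on $|G|$, reducing over a minimal normal subgroup $N$ of $G$ — a $\pi$-group or a $\pi'$-group, as $G$ is $\pi$-separable — by applying Clifford theory simultaneously to $\chi$ and to an extension of $\lambda$. The degree hypothesis descends: if $\theta\in\irr(N)$ lies under $\chi$ and $T=G_\theta$, then $|G:T|$ divides $\chi(1)$, so $T$ still contains a Hall $\pi$-subgroup of $G$; and the canonical nucleus of the Clifford correspondent of $\chi$ in $T$ is the Clifford correspondent of $(W_0,\alpha_0\beta_0)$, so tracking the linear $\pi$-special part through the reduction confines $V$ inside $W_0$. The main obstacle is the case where $\theta$ is $G$-invariant, so that ordinary Clifford theory gives nothing: there one must invoke the extension and uniqueness theory of $\pi$-special and $\pi'$-special characters (Gajendragadkar, Isaacs) to factor $\chi$ through $G/N$-data while keeping exact control of $\alpha$ and of $\lambda$. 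That analysis, together with the bookkeeping required to see that the maximal $\lambda$-extension subgroup is forced to coincide with the canonically defined nucleus $W_0$ (which simultaneously yields its uniqueness), is the technical core of the proof and the only part that goes essentially beyond Theorem~\ref{theorem:nucleus}.
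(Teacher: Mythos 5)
This statement is quoted by the paper directly from \cite[Theorem 3.6]{IN}; no proof is given in the text, so your attempt can only be measured against what the theorem actually asserts. Your first paragraph is correct: from $\chi=(\alpha_0\beta_0)^{G}$ one gets $\chi(1)=\abs{G:W_0}\,\alpha_0(1)\,\beta_0(1)$, each factor divides the $\pi'$-number $\chi(1)$, and since $\alpha_0(1)$ is also a $\pi$-number it equals $1$, while $\abs{G:W_0}$ being a $\pi'$-number lets you conjugate so that $H\leq W_0$. This cleanly derives the existence assertion from Theorem~\ref{theorem:nucleus}. Your auxiliary observation that a linear $\pi$-special character of an intermediate subgroup $V\geq H$ is determined by its restriction to $H$ (via $V=\op{O}^{\pi}(V)H$ and the fact that such a character has $\pi$-order) is also correct and useful.

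The gap is that the final clause --- the only part of the statement the paper actually uses, since the proof of Proposition~\ref{proposition:1} relies precisely on the maximality of $W$ to force $W=G$ --- is not proved. You correctly reduce it to the claim that every $V$ with $H\leq V\leq G$ to which $\lambda=\alpha_H$ extends satisfies $V\leq W_0$, but the proposed induction rests on two steps that are only asserted: (i) compatibility of the nucleus construction with Clifford correspondence over a minimal normal subgroup (``the canonical nucleus of the Clifford correspondent of $\chi$ in $T$ is the Clifford correspondent of $(W_0,\alpha_0\beta_0)$''), which is a nontrivial theorem in the machinery of \cite{Is1} and not a formal consequence of anything stated here; and (ii) the case of a $G$-invariant constituent $\theta$, which you explicitly label ``the technical core'' and do not carry out --- yet this is exactly where Isaacs and Navarro do the real work in \cite{IN}, controlling the linear $\pi$-special factor through a fully ramified or extendible situation. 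As written, the argument proves the easy half of the theorem and substitutes a plausible strategy for the hard half, so it does not constitute a proof of the statement.
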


Let us recall the behaviour of the $\B{\pi}$-characters in relation with normal subgroups.

\begin{theorem}
\label{theorem:Bpinormalsubgroups}
Let $G$ be $\pi$-separable, let $M \unlhd G$ and let $\chi \in \B{\pi}(G)$, then every irreducible constituent of $\chi_M$ belongs ot $\B{\pi}(M)$.

On the other hand, let $\psi \in \B{\pi}(M)$; if $\abs{G:M}$ is a $\pi$-number, then every irreducible constituent of $\psi^G$ is in $\B{\pi}(G)$ while, if $\abs{G:M}$ is a $\pi'$-number, then there exists a unique irreducible constituent of $\psi^G$ which belongs to $\B{\pi}(G)$.

In particular, if $\psi \in \B{\pi}(M)$, then there exists always at least one character $\chi \in \irr(G \mid \psi)$ which belongs to $\B{\pi}(G)$.
\end{theorem}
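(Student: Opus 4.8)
The plan is to prove the three assertions of Theorem~\ref{theorem:Bpinormalsubgroups} by reducing them to the known behaviour of $\pi$-special characters and the "nucleus" machinery recalled in Theorems~\ref{theorem:nucleus} and~\ref{prop:bpiespec}. For the first assertion, let $\chi \in \B{\pi}(G)$ and let $\vartheta$ be an irreducible constituent of $\chi_M$. Since $\B{\pi}$-characters are defined via the lifting of $\pi$-partial characters $\op{I}_{\pi}$, the cleanest route is to invoke the restriction property of $\pi$-partial characters: the restriction to $M$ of the partial character $\chi^0 \in \op{I}_{\pi}(G)$ decomposes into characters in $\op{I}_{\pi}(M)$, and the canonical lift of each constituent is exactly the $\B{\pi}(M)$-character lying under $\chi$. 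Concretely, I would cite the fact (from \cite{Is1}) that $\B{\pi}$-characters restrict nicely along normal subgroups; alternatively one argues through Clifford theory together with Theorem~\ref{prop:bpiespec} when $\chi(1)$ is a $\pi$-number (so $\chi$ is $\pi$-special and one uses the known restriction of $\pi$-special characters), and handles the general case by passing through the nucleus $W$ of Theorem~\ref{theorem:nucleus}, intersecting $W$ with $M$.

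For the second assertion I would split on whether $|G:M|$ is a $\pi$-number or a $\pi'$-number. If $|G:M|$ is a $\pi$-number, take $\psi \in \B{\pi}(M)$ and an irreducible constituent $\chi$ of $\psi^G$; I want to show $\chi \in \B{\pi}(G)$. Passing to the corresponding $\pi$-partial characters, $\psi^0 \in \op{I}_{\pi}(M)$ and its induction decomposes into $\op{I}_{\pi}(G)$-characters because induction commutes with restriction to $\pi$-elements; since the canonical lift of an $\op{I}_{\pi}(G)$-character is a $\B{\pi}(G)$-character, and such lifts are the unique irreducible characters lying over the right partial character, one identifies $\chi$ (or its "$\pi$-part") as a $\B{\pi}(G)$-character. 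If $|G:M|$ is a $\pi'$-number, the key point is uniqueness: among the constituents of $\psi^G$ there is exactly one with the "right" restriction behaviour, and that is the $\B{\pi}(G)$-character. Here I would lean on the fact that when $|G:M|$ is a $\pi'$-number the $\B{\pi}(G)$-characters over $\psi$ correspond bijectively (in fact uniquely) to $\op{I}_{\pi}(G)$-characters over $\psi^0$; this is essentially the statement that $\op{I}_{\pi}$-characters are determined on $\pi$-elements and $M$ already contains a full Hall $\pi$-subgroup in the relevant local sense.

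The third assertion is then immediate: given $\psi \in \B{\pi}(M)$, interpolate a chain $M = M_0 \lhd M_1 \lhd \dots \lhd M_k = G$ refining a composition-type series so that each index $|M_{i+1}:M_i|$ is either a $\pi$-number or a $\pi'$-number (possible because $G$ is $\pi$-separable), and apply the second assertion repeatedly, at each step choosing the (existing, and in the $\pi'$ case unique) $\B{\pi}$-constituent; the final character lies in $\B{\pi}(G)$ and in $\irr(G \mid \psi)$ by transitivity of induction.

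The main obstacle I anticipate is the $\pi'$-index case of the second assertion — specifically, establishing \emph{uniqueness} of the $\B{\pi}(G)$-constituent of $\psi^G$ rather than mere existence. Existence follows comfortably from the partial-character formalism (lifts always exist), but uniqueness requires the structural input that distinct irreducible constituents of $\psi^G$ restrict to the same partial character of $M$ yet a $\B{\pi}(G)$-character is pinned down by its partial character; one must be careful that induction from $M$ can produce several $\B{\pi}(M)$-unrelated constituents, and only one of them survives as a $\B{\pi}(G)$-character. I expect to quote the relevant uniqueness statement from \cite{Is1} (it is part of the foundational correspondence between $\op{I}_{\pi}$ and $\B{\pi}$) rather than reprove it, so the write-up will mostly be careful bookkeeping about which index type occurs and invoking the right lemma of Isaacs at each stage.
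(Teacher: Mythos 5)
Your proposal is correct and matches the paper's approach: the paper proves this statement simply by citing the foundational restriction and induction results for $\operatorname{B}_{\pi}$-characters over normal subgroups, namely Theorems 6.2 and 7.1 of \cite{Is1}, which is exactly what you propose to quote rather than reprove. Your additional step for the final claim (chaining through a normal series of $G$ over $M$ with $\pi$- and $\pi'$-indices, guaranteed by $\pi$-separability) is the standard and correct way to deduce it from the other two assertions.
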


\begin{proof}
It is a direct consequence of \cite[Theorem 6.2]{Is1} and \cite[Theorem 7.1]{Is1}
\end{proof}

A basilar property of the $\B{\pi}$-characters concerns their restriction to Hall $\pi$-subgroups.

\begin{theorem}[\emph{\cite[Theorem 8.1]{Is1}}]
\label{theorem:fongchar}
Let $\chi \in \B{\pi}(G)$ and let $H \in \hall{\pi}{G}$, then there exists an irreducible constituent $\phi$ of $\chi_H$ such that $\phi(1)=\chi(1)_{\pi}$. Moreover, for any irreducible constituent $\phi$ of $\chi_H$ such that $\phi(1)=\chi(1)_{\pi}$, the multiplicity of $\phi$ in $\chi_H$ is 1 and $\phi$ does not appear as an irreducible constituent of the restriction to $H$ of any other character in $\B{\pi}(G)$.
\end{theorem}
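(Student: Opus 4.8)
My plan is to argue by induction on $\abs{G}$, stripping off normal subgroups until a Hall $\pi$-subgroup becomes normal. If $G$ is a $\pi$-group then $H=G$ and $\chi_H=\chi$, so there is nothing to prove; otherwise let $N$ be a minimal normal subgroup of $G$, so $N$ is either a $\pi$-group or a $\pi'$-group. If $N$ is a $\pi'$-group, then $\B{\pi}(N)=\{1_N\}$, so by Theorem~\ref{theorem:Bpinormalsubgroups} every $\eta\in\B{\pi}(G)$ has $N\le\ker\eta$; then $\B{\pi}(G)$ is identified with $\B{\pi}(G/N)$ preserving degrees, and restriction to $H$ corresponds to restriction to $HN/N\cong H\in\hall{\pi}{G/N}$, so the statement follows by induction. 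Hence I may assume $N$ is an elementary abelian $p$-group with $p\in\pi$; then $N\le Q$ for all $Q\in\hall{\pi}{G}$, and I fix a linear $\lambda\in\irr(N)$ lying under $\chi$.

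Suppose first that $\lambda$ is not $G$-invariant, and put $T=I_G(\lambda)<G$. As the statement is conjugation-invariant I may assume $H$ contains a Hall $\pi$-subgroup $H_0$ of $T$; then $H_0=H\cap T=I_H(\lambda)$, $N\le H_0\le H$, and $\abs{H:H_0}=\abs{G:T}_{\pi}$. The $\B{\pi}$-analogue of the Clifford correspondence gives $\psi\in\B{\pi}(T)$ lying over $\lambda$ with $\psi^{G}=\chi$; by induction there is $\mu\in\irr(H_0)$ under $\psi_{H_0}$ with $\mu(1)=\psi(1)_{\pi}$, of multiplicity one, occurring in no other $\B{\pi}(T)$-character. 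Since $\mu$ lies over $\lambda$ and $H_0=I_H(\lambda)$, the Clifford correspondence \emph{inside} $H$ makes $\phi:=\mu^{H}$ irreducible, of degree $\abs{H:H_0}\mu(1)=\abs{G:T}_{\pi}\psi(1)_{\pi}=\chi(1)_{\pi}$. Expanding $(\psi^{G})_H$ by Mackey's formula, the double cosets outside $T$ contribute nothing to $\langle\chi_H,\phi\rangle$ (a $g\notin T$ sends $\lambda$ to a different character of $N$, and a constituent on which $N$ acts through $\lambda$ cannot be one on which $N$ acts through $\lambda^{g}$), so $\langle\chi_H,\phi\rangle=\langle\psi_{H_0},\mu\rangle=1$; running the same computation for an arbitrary $\eta\in\B{\pi}(G\mid\lambda)$, together with the Clifford correspondence and the exclusivity of $\mu$, shows $\phi$ occurs in no other $\B{\pi}(G)$-character.

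Now suppose $\lambda$ is $G$-invariant. If $\lambda\neq 1_N$, then, since a faithful character of a cyclic group of prime order has trivial stabiliser in its automorphism group, $N/\ker\lambda$ is central in $G/\ker\lambda$; as $\ker\lambda\le\ker\chi$ one passes to $G/\ker\lambda$ (by induction when $\ker\lambda\neq 1$), reducing to the case $N\cong C_p$ central with $\lambda$ faithful. (Exclusivity survives this step because a $\B{\pi}(G)$-character not trivial on $\ker\lambda$ restricts on $\ker\lambda$ to a sum of nontrivial linear characters, so no constituent of its $H$-restriction is trivial on $\ker\lambda$, whereas the Fong constituents of the characters trivial on $\ker\lambda$ are.) In the central case, if $\lambda$ extends to $G$ it extends to a $\pi$-special linear character $\hat\lambda$; multiplication by $\hat\lambda$ identifies $\B{\pi}(G/N)$ with $\B{\pi}(G\mid\lambda)$, and writing $\chi=\hat\lambda\tau$ one takes $\phi=\hat\lambda_H\,\tilde\nu$, where $\tilde\nu$ is the pull-back of the Fong constituent of $\tau$ supplied by induction on $G/N$: this $\phi$ is irreducible of degree $\chi(1)_{\pi}$, has multiplicity one in $\chi_H$, and is exclusive, by the manipulations of the previous paragraph. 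If $\lambda$ does not extend, the obstruction to its extension has order dividing $p$, so a character-triple replacement compatible with the $\B{\pi}$- and $\pi$-special structure and with Hall $\pi$-subgroups (enlarging the central $\pi$-subgroup by a factor $C_p$) makes $\lambda$ extend, reducing to the previous situation.

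The step I expect to be the real obstacle is this last one, the faithful central character that does not extend: one needs the $C_p$-central extension carried out \emph{inside} the world of $\pi$-separable groups in a manner that simultaneously respects $\B{\pi}$-characters, $\pi$-special characters, restriction to a Hall $\pi$-subgroup, and the exclusivity bookkeeping --- that is, the full strength of Isaacs's $\pi$-special character-triple machinery, not merely ordinary character triples. Once the correct Fong constituent has been pinned down ($\mu^{H}$ in the Clifford descent, $\hat\lambda_H\tilde\nu$ in the extending central case), the remaining verifications of degree, multiplicity one, and exclusivity are routine Mackey/Clifford bookkeeping.
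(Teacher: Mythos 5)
The paper does not actually prove this statement: it is quoted verbatim from \cite[Theorem 8.1]{Is1} and used as an imported black box, so there is no internal proof to compare yours against. Judged on its own terms, your outline follows the natural Clifford-theoretic induction, but it contains genuine gaps, the decisive one being the step you flag yourself. When the faithful central character $\lambda$ of $N\cong C_p$ does not extend to $G$, you appeal to a ``character-triple replacement compatible with the $\B{\pi}$- and $\pi$-special structure and with Hall $\pi$-subgroups.'' Ordinary character-triple isomorphisms give none of this compatibility: constructing a replacement that simultaneously preserves membership in $\B{\pi}$, $\pi$-speciality, Hall $\pi$-subgroups and the multiplicity bookkeeping is essentially the technical content of Isaacs's theory of $\pi$-factored characters and nuclei --- that is, of the theorem you are trying to prove. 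Naming that step as the obstacle does not discharge it, and without it the induction never closes.

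Two further problems. A minimal normal $\pi$-subgroup of a $\pi$-separable group need not be abelian when $2\in\pi$ (it can be a direct product of nonabelian simple $\pi$-groups, since $\pi$-separability is weaker than $\pi$-solvability), so your reduction to ``$N$ elementary abelian with $p\in\pi$'' leaves an entire case untreated. Moreover, the theorem's second sentence quantifies over \emph{every} constituent $\phi$ of $\chi_H$ with $\phi(1)=\chi(1)_\pi$, whereas your argument constructs one particular constituent ($\mu^H$ in the Clifford descent, $\hat\lambda_H\tilde\nu$ in the central case) and verifies multiplicity one and exclusivity only for that one; constituents of $\chi_H$ lying over $G$-conjugates of $\lambda$ belonging to other $(T,H)$-double cosets, where $H\cap T^g$ need not be a Hall $\pi$-subgroup of $T^g$, are not addressed at all.
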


A character $\phi \in \irr(H)$ like the ones in Theorem~\ref{theorem:fongchar} is called \emph{Fong character} associated with $\chi$. It is in general quite difficult to tell if an irreducible character of the Hall $\pi$-subgroup $H$ is a Fong character associated with some $\B{\pi}$-character of $G$; a characterization is presented in \cite{Is2}. The problem is simpler, however, if one consider only the primitive characters of $H$.

\begin{theorem}[\emph{(\cite[Corollary 6.1]{Is2} or \cite[Theorem 5.13]{Is4})}]
\label{theorem:primitivefongchar}
Let $H$ be a Hall $\pi$-subgroup of a $\pi$-separable group $G$ and let $\phi \in \irr(H)$. If $\phi$ is primitive, then it is a Fong character associated with some character in $\B{\pi}(G)$. If $\phi_1$ is another primitive irreducible character of $H$, then $\phi$ and $\phi_1$ are associated with the same character in $\B{\pi}(G)$ if and only if they are $\no{G}{H}$-conjugated.
\end{theorem}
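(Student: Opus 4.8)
The plan is to prove Theorem~\ref{theorem:primitivefongchar} by reducing the statement about primitive characters of $H$ to the general theory of Fong characters recalled in Theorem~\ref{theorem:fongchar}, using the nucleus construction of Theorem~\ref{theorem:nucleus} together with the ``linear restriction to $H$'' feature of the maximal nucleus in Theorem~\ref{theorem:maximalnucleus}. Concretely, given a primitive $\phi \in \irr(H)$, I would first produce a $\B{\pi}$-character $\chi$ of $G$ whose Fong character is $\phi$, and then analyze exactly when two such primitive characters give the same $\chi$.

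\medskip

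\textbf{Existence.} First I would argue that a primitive character $\phi \in \irr(H)$ is a Fong character. The natural route: consider the $\pi$-special characters of $H$ (since $H$ is a $\pi$-group, $\X{\pi}(H) = \irr(H)$), and more usefully, look at $\phi$ as potentially extending to a larger subgroup. By Theorem~\ref{theorem:fongchar}, Fong characters associated with $\B{\pi}(G)$ are characterized among the irreducible characters of $H$ by a degree/multiplicity condition; the content of \cite[Corollary 6.1]{Is2} is that primitivity suffices. A clean way to get this using only what is quoted here is: take any $\chi \in \B{\pi}(G)$ lying over the trivial character of a suitable normal subgroup, or rather, build the $\B{\pi}$-character directly. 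Given $\phi$ primitive, form $\alpha \in \X{\pi}(H)$ with $\alpha = \phi$; then by the nucleus machinery there is a $\B{\pi}$-character $\chi$ of $G$ with $(\chi)_H$ containing $\phi$, and primitivity of $\phi$ forces $\phi$ to be the Fong constituent (i.e. $\phi(1) = \chi(1)_\pi = \chi(1)$ if $\chi$ is $\pi$-special, but in general we need $\phi(1) = \chi(1)_\pi$). The key point is that a primitive character of a $\pi$-group ``looks like'' the restriction of a $\pi$-special character of $G$ to $H$; I would make this precise by invoking that primitive characters are not induced from any proper subgroup, so the standard Clifford-theoretic shrinking that produces non-Fong constituents cannot occur.

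\medskip

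\textbf{The bijection / conjugacy criterion.} For the second part, suppose $\phi$ and $\phi_1$ are primitive characters of $H$, Fong characters for $\chi$ and $\chi_1$ respectively. If $\phi_1 = \phi^n$ for some $n \in \no{G}{H}$, then conjugating the whole picture by $n$ (which normalizes $H$ hence permutes $\irr(H)$, and fixes $G$ hence fixes $\B{\pi}(G)$ pointwise up to the $G$-action, but $n \in G$ so it fixes each $\chi \in \irr(G)$) shows $\chi_1 = \chi^n = \chi$; so same $\B{\pi}$-character. Conversely, if $\chi = \chi_1$, then $\phi$ and $\phi_1$ are both Fong constituents of the same $\chi \in \B{\pi}(G)$; the Fong character is unique up to $\no{G}{H}$-conjugacy in general — this is essentially the ``moreover'' part of the Fong character theory (not fully stated in the excerpt but standard), or one derives it from Theorem~\ref{theorem:maximalnucleus}: the maximal nucleus $W$ is unique up to $G$-conjugacy, and among conjugates containing $H$ it is unique up to $\no{G}{H}$-conjugacy, and $\phi = (\alpha\beta)_H$ is determined by $W$ and the choice of $(\alpha,\beta)$, which for primitive $\phi$ is rigid enough that the ambiguity collapses to $\no{G}{H}$-conjugacy.

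\medskip

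\textbf{Main obstacle.} The hard part will be the existence direction — showing primitivity alone guarantees $\phi$ is a Fong character — since the general characterization (\cite{Is2}) is more delicate. I expect the cleanest argument proceeds by induction on $|G|$: pick a minimal normal subgroup $M$ of $G$; if $M$ is a $\pi'$-group, then $HM/M \cong H$ and one lifts from $G/M$ using Theorem~\ref{theorem:Bpinormalsubgroups}; if $M$ is a $\pi$-group, then $M \leq H$, $\phi$ lies over some primitive (by primitivity of $\phi$, actually $G$-invariant up to the relevant action) character of $M$, and one passes to the stabilizer, using that primitivity is inherited appropriately. The bookkeeping of which constituent is ``the'' Fong character through these reductions — i.e. that the degree condition $\phi(1) = \chi(1)_\pi$ is preserved — is where care is needed, but no single step is deep once the nucleus/Fong correspondence of Theorems~\ref{theorem:nucleus} and~\ref{theorem:fongchar} is in hand.
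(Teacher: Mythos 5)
The paper does not actually prove this statement: it is imported verbatim from the literature (\cite[Corollary 6.1]{Is2}, \cite[Theorem 5.13]{Is4}), so there is no internal proof to compare against; judged on its own terms, your proposal has two genuine gaps. The first is the existence direction, which you flag as the hard part but do not resolve. The nucleus machinery of Theorem~\ref{theorem:nucleus} runs in the wrong direction for your purposes: it starts from $\chi \in \irr(G)$ and produces a pair $(W,\alpha\beta)$; it does not manufacture, from a given primitive $\phi \in \irr(H)$, a character $\chi \in \B{\pi}(G)$ having $\phi$ as a constituent of $\chi_H$ of degree exactly $\chi(1)_\pi$. Your key sentence --- that primitivity rules out ``the standard Clifford-theoretic shrinking that produces non-Fong constituents'' --- is not an argument: the constituents of $\chi_H$ of degree larger than $\chi(1)_\pi$ are not exhibited as induced from proper subgroups of $H$ by anything you have established, so primitivity of $\phi$ gives you no direct leverage. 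The inductive scheme you sketch (minimal normal $\pi'$- or $\pi$-subgroup) has the right shape, but the step you yourself identify as delicate --- preserving the equality $\phi(1)=\chi(1)_\pi$ through the reduction and through the passage to a stabilizer when $M \leq H$ --- is precisely where the substance of \cite{Is2} lies, and it is left unproved.

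The second gap is in the converse of the conjugacy criterion. You rest it on the claim that ``the Fong character is unique up to $\no{G}{H}$-conjugacy in general,'' described as standard. It is not: whether all Fong characters attached to a fixed $\chi \in \B{\pi}(G)$ form a single $\no{G}{H}$-orbit is a delicate question raised explicitly in \cite{Is2}, and the single-orbit statement for \emph{primitive} Fong characters is exactly the second assertion of the theorem you are trying to prove, so this portion of your argument is circular. (The forward direction is fine: an element $n \in \no{G}{H}$ is an inner automorphism of $G$, hence fixes $\chi$ and permutes the Fong constituents of $\chi_H$.) The fallback you offer via Theorem~\ref{theorem:maximalnucleus} does not repair this, since that theorem applies only to $\chi \in \irr_{\pi'}(G)$, whereas here $\chi(1)_\pi = \phi(1)$ can be arbitrary.
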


\section{Some examples}

Considering the nature of the results presented in this paper, a natural question a reader may ask is whether the set $\B{\pi}(G) \cup \B{\pi}(G)$ is actually strictly smaller then $\irr(G)$. This happens quite often. In fact, one of the properties of $\B{\pi}$-characters (see \cite[Theorem~9.3]{Is1}) is that $\abs{\B{\pi}(G)}$ is equal to the number of conjugacy classes of $\pi$-elements of $G$. Therefore, $\B{\pi}(G) \cup \B{\pi}(G) = \irr(G)$ if and only if each element of the $\pi$-separable group $G$ is either a $\pi$-element or a $\pi'$-element. This is proved in \cite[Lemma~4.2]{GR} to happen if and only if $G$ is a Frobenius or a 2-Frobenius group and each Frobenius complement and Frobenius kernel is either a $\pi$-group or a $\pi'$-group.

Let us call $\cd(G)$ the set of irreducible character degrees of $G$ and let us refer as $\BBcd{\pi}(G)$ and $\BBcd{\pi'}(G)$ to the sets of character degrees of, respectively, $\B{\pi}$-characters and $\B{\pi'}$-characters. Even when $\B{\pi}(G) \cup \B{\pi}(G)$ is strictly smaller then $\irr(G)$, it may happen that $\cd(G) = \BBcd{\pi}(G) \cup \BBcd{\pi'}(G)$. This happens, for example, if we consider the group $\op{SL}(2,3) \ltimes (\mathbb{Z}_3)^2$, with $\pi=\{2\}$, or the group $(\op{C}_3 \ltimes \op{C}_7) \wr \op{C}_2$, with $\pi=\{7\}$.

%
%
%

However, for a $\pi$-separable group $G$, in general we have that $\cd(G) \neq \BBcd{\pi}(G) \cup \BBcd{\pi'}(G)$. A first, obvious example of this fact is when $G=H \times K$, with $H$ a $\pi$-group and $K$ a $\pi'$-group, both nonabelian. In this case, in fact, we have that $\B{\pi}(G)=\irr(H)$ and $\B{\pi'}(G)=\irr(K)$.

Let us see some less trivial examples.

\begin{example}
\label{example:2}
A first example is derived directly form the trivial one. Let $G=H \times K$, with $H$ a $\pi$-group and $K$ a $\pi'$-group, both nonabelian, and let $\Gamma = G \; \wr \; \op{C}_2$. Suppose $2 \in \pi$. Let $\theta$ be a nonlinear character in $\irr(H)$ and let $\eta$ be a nonlinear character in $\irr(K)$. The character $(\theta \times 1_K) \times (1_H \times \eta)$ is an irreducible character of the base group $G \times G$ and it is not $\Gamma$-invariant, therefore it induces irreducibly to a character $\chi \in \irr(\Gamma)$.

We have that the $\pi$-part of the degree of $\chi$ is $\chi(1)_{\pi}=2\theta(1)>2$ and the $\pi'$-part of the degree is $\chi(1)_{\pi'}=\eta(1)>1$. Suppose there exists $\psi \in \B{\pi}(\Gamma) \cup \B{\pi'}(\Gamma)$ such that $\chi(1)=\psi(1)$ and let $\lambda_1 \times \lambda_2$ be a constituent of $\psi_{G \times G}$. Since $2 \in \pi$, $\lambda_1(1)_{\pi'} \lambda_2(1)_{\pi'}=\psi(1)_{\pi'} > 1$, thus, $\lambda_1,\lambda_2 \in \B{\pi'}(G)$. As a consequence, $\lambda_1(1)_{\pi} \lambda_2(1)_{\pi}=1$ while $\psi(1)_{\pi}>2$, a contradiction.

It follows that $\BBcd{\pi}(\Gamma) \cup \BBcd{\pi'}(\Gamma)$ is strictly smaller then $\cd(\Gamma)$.
\end{example}

The group in Example~\ref{example:2} still involves a group of type $H \times K$, with $H$ a $\pi$-group and $K$ a $\pi'$-group. However, this is not a necessary condition, as the next example shows.

\begin{example}
Let $G = H \ltimes M$, with $H=\op{SL}(2,3)$ acting canonically on the vector space $M=(\mathbb{Z}_3)^2$. Computing the character table of $G$, we can see that $\cd(G)=\{1,2,3,8\}$ and, with a little more work, it is not hard to prove that $\BBcd{3}(G)=\{1,8\}$ and $\BBcd{2}(G)=\{1,2,3\}$.

Now, let $\Gamma=G \; \wr \; \op{C}_2$, let $\theta \in \B{3}(G)$ of degree 8 and let $\eta \in \B{2}(G)$ of degree 3. The character $\theta \times \eta \in \irr(G \times G)$ induces irreducibly to $\Gamma$ and $\chi(1)=48$.

Suppose there exists $\psi \in \B{2}(\Gamma) \cup \B{3}(\Gamma)$ such that $\psi(1)=\chi(1)$ and let $\lambda_1 \times \lambda_2$ be an irreducible constituent of $\psi_{G \times G}$. Then, $\lambda_1$ and $\lambda_2$ are either both in $\B{2}(G)$ or they are both in $\B{3}(G)$. Moreover, since $\psi(1)=48$, then $\lambda_1(1)\lambda_2(1) \in \{24,48\}$. However, neither 24 nor 48 can be written as a product of two numbers in $\BBcd{2}(G)$ or as a product of two numbers in $\BBcd{3}(G)$. It follows that $48 \in \cd(\Gamma)$ but $48 \notin \BBcd{2}(\Gamma) \cup \BBcd{3}(\Gamma)$.
\end{example}

\section{Character degrees and normal subgroups}

In this section we are going to see the proofs of Theorem~\ref{result:BpiItoMichler}. We also give a different proof of a result which appears in \cite{Is4}.

The technique used to prove the results in this section mirrors the one used in \cite{DPSS}. In particular, the key result for the section is the following lemma, borrowed from \cite{DPSS}.

\begin{lemma}
\label{lemma:vanishonA}
Let $G$ be a group, let $N$ be a normal minimal $\pi'$-subgroup and let $M \unlhd G$ such that $M/N$ is an abelian $\pi$-group. Furthermore, suppose that $\rad{\pi}{M}=1$. Then, there exists a character $\chi \in \B{\pi'}(G)$ such that $\chi(1)$ is divided by $\abs{M:N}$.
\end{lemma}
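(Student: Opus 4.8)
The plan is to work by induction on $\abs{G}$ and to exploit the structure $N \leq M$ with $M/N$ an abelian $\pi$-group, $N$ a minimal normal $\pi'$-subgroup, and $\rad{\pi}{M}=1$. The first observation is that since $\rad{\pi}{M}=1$ and $M/N$ is an abelian $\pi$-group, the subgroup $N$ must contain $\ce{M}{N}$; indeed $\ce{M}{N} \unlhd G$ and $\ce{M}{N}/N \cap \ze{M/N}$ considerations force any $\pi$-part of $\ce{M}{N}$ into $\rad{\pi}{M}=1$. Thus $M/N$ acts faithfully on the elementary abelian $\pi'$-group $N$. Since $N$ is abelian, $\irr(N)$ is permuted by the action of $M/N$, and the faithfulness of the action together with standard results on coprime action (Glauberman, or simply counting orbits via Brauer's permutation lemma) guarantees that there is a linear character $\lambda \in \irr(N)$ whose stabilizer $T = I_M(\lambda)$ satisfies $\abs{M:T}$ being divisible by a suitable divisor of $\abs{M:N}$; the cleanest route is to find $\lambda$ with $I_{M/N}(\lambda) = 1$, so that $\abs{M:T} = \abs{M:N}$, which is possible exactly because a faithful action of an abelian group on an abelian group has a regular orbit on the dual group after passing to a suitable section (this is where one may need to first reduce to the case where $M/N$ is a $q$-group for a single prime $q \in \pi$, handling the general case by taking a chief factor).

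Next I would push $\lambda$ up to $G$. Let $T^* = I_G(\lambda) \geq T$. Since $N$ is a $\pi'$-group and $\lambda$ is linear, $\lambda \in \B{\pi'}(N)$ trivially. By Clifford theory applied along $N \unlhd M \unlhd G$, a character $\psi \in \irr(T^* \mid \lambda)$ induces irreducibly to $\chi = \psi^G \in \irr(G \mid \lambda)$, and $\chi(1) = \abs{G:T^*}\psi(1)$ is divisible by $\abs{M : T^* \cap M} = \abs{M:T} = \abs{M:N}$ (using that $T^* \cap M = T$ since $I_M(\lambda) = T$). The remaining task is to arrange that $\chi$ can be taken in $\B{\pi'}(G)$. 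For this I would invoke the normal-subgroup behaviour of $\B{\pi'}$-characters (the $\pi' \leftrightarrow \pi$ mirror of Theorem~\ref{theorem:Bpinormalsubgroups}): since $\lambda \in \B{\pi'}(N)$, there is at least one character in $\irr(G \mid \lambda)$ lying in $\B{\pi'}(G)$. One must then check that such a $\B{\pi'}$-character over $\lambda$ still has degree divisible by $\abs{M:N}$ — this follows because \emph{every} irreducible constituent of $\lambda^M$ (equivalently, every member of $\irr(G\mid\lambda)$ restricted to $M$) has degree divisible by $\abs{M:T}=\abs{M:N}$, as the orbit of $\lambda$ under $M$ has that size and any character over $\lambda$ restricted to $M$ is a sum over that orbit.

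The main obstacle I anticipate is producing the character $\lambda \in \irr(N)$ with stabilizer in $M/N$ as small as required: a faithful action of an abelian group need not have a regular orbit on an elementary abelian module in general, so one cannot expect $I_{M/N}(\lambda)=1$ outright. The fix is the standard reduction: replace $M/N$ by a chief factor $M/K$ (so $M/K$ is an elementary abelian $q$-group for some $q \in \pi$) acting faithfully — or almost faithfully — on an appropriate irreducible summand of $N$ viewed as an $\mathbb{F}_q[M/N]$-module, use that an abelian $q$-group acting faithfully and irreducibly on a module over a field of characteristic $\neq q$ is cyclic and hence has a regular orbit on the nonzero vectors of the dual, and then climb back up through the (solvable, $\pi$-separable) layers of $M/N$ one chief factor at a time, at each stage multiplying the degree-divisibility by the size of that chief factor. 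This inductive bookkeeping, together with keeping the character inside $\B{\pi'}$ at every stage via Theorem~\ref{theorem:Bpinormalsubgroups}'s $\pi'$-analogue, is the technical heart of the argument; the rest is routine Clifford theory.
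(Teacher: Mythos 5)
Your overall strategy coincides with the paper's: show that $M/N$ acts faithfully on $N$, find $\lambda\in\irr(N)$ with trivial stabilizer in $M/N$ so that $\lambda^{M}$ is irreducible of degree $\abs{M:N}$, and then pass to a character of $\B{\pi'}(G)$ over it via the normal-subgroup behaviour of $\B{\pi'}$-characters. The Clifford-theoretic bookkeeping at the end and the use of $\B{\pi'}(G\mid\lambda)\neq\emptyset$ are fine and match the paper. The problem is that the one step you flag as ``the technical heart'' — the existence of the regular orbit on $\irr(N)$ — is exactly the step you do not actually prove. Your worry is misplaced and your workaround is unsound. It is misplaced because the action here is coprime ($M/N$ is a $\pi$-group acting on the $\pi'$-group $N$), and an abelian group acting faithfully and coprimely on $N$ always has a regular orbit on $\irr(N)$; this is precisely \cite[Lemma 2.8]{DPSS}, which the paper invokes after replacing $M/N$ by a Schur--Zassenhaus complement $A$ of $N$ in $M$ (the faithfulness coming, as in your sketch, from $\ce{A}{N}\leq\ze{M}$ and $\rad{\pi}{M}=1$). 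It is unsound because the proposed ``climb back up through chief factors, multiplying the degree-divisibility at each stage'' does not work as stated: an irreducible character of an intermediate layer $K$ with $N\leq K\leq M$ of degree $\abs{K:N}$ may perfectly well be $M$-invariant, and then the characters of $M$ lying over it need not have degree divisible by $\abs{M:K}\cdot\abs{K:N}$. Some genuine argument is needed at this point, and you have not supplied one.

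A second gap: you assume throughout that $N$ is elementary abelian. The lemma only assumes that $N$ is a minimal normal $\pi'$-subgroup, so $N$ may be a direct product of isomorphic nonabelian simple $\pi'$-groups, and the paper applies the lemma in situations where this can occur. Your machinery for producing the regular orbit (Brauer's permutation lemma on the dual group, decomposition of $N$ as an $\mathbb{F}_q[M/N]$-module, cyclicity of abelian groups acting faithfully and irreducibly) is specific to the abelian case, whereas \cite[Lemma 2.8]{DPSS} is available for a general group $N$ acted on faithfully and coprimely by an abelian group. With these two points repaired — cite or prove the coprime regular orbit theorem in the generality needed, and drop the abelian assumption on $N$ — your argument becomes essentially the paper's proof.
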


\begin{proof}
Let $A$ be a complement for $N$ in  $M$, which exists by the Schur–Zassenhaus theorem. Since $\ce{A}{N} \leq \ze{M}$ and $A$ is a $\pi$-group, $\ce{A}{N} \leq \rad{\pi}{M} \leq \rad{\pi}{G}=1$; thus, $A$ acts faithfully on $N$. By \cite[Lemma 2.8]{DPSS}, there exists some character $\tau \in \irr(N)$ such that $\eta=\tau^M \in \irr(M)$. In particular, $\abs{M:N}$ divides $\eta(1)$. Since $\tau \in \B{\pi'}(N) = \irr(N)$ and $N$ is normal in $M$, by Theorem~\ref{theorem:Bpinormalsubgroups} the character $\eta$ is in $\B{\pi'}(M)$, too. It follows that $\B{\pi'}(G \mid \eta)$ is nonempty and $\abs{M:N}$ divides the degree of every character in $\B{\pi'}(G \mid \eta)$.
\end{proof}

Now, as anticipated, we are going to present a different proof of \cite[Theorem 3.17]{Is4} using Lemma~\ref{lemma:vanishonA}.

\begin{theorem}[\emph{\cite[Theorem 3.17]{Is4}}]
Let $G$ be $\pi$-separable, then $\B{\pi}(G)=\X{\pi}(G)$ if and only if $G$ has a normal $\pi$-complement.
\end{theorem}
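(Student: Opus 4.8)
The plan is to prove both implications by playing off the relationship between $\B{\pi}$-characters and $\pi$-special characters (Theorem~\ref{prop:bpiespec}) against the structure statements in Lemma~\ref{lemma:vanishonA} and Theorem~\ref{theorem:Bpinormalsubgroups}.

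For the easy direction, suppose $G$ has a normal $\pi$-complement $N \unlhd G$, so $G/N$ is a $\pi$-group and $N$ is a $\pi'$-group. I would take $\chi \in \B{\pi}(G)$ and argue that $\chi(1)$ is a $\pi$-number: by Theorem~\ref{theorem:Bpinormalsubgroups} every irreducible constituent $\theta$ of $\chi_N$ lies in $\B{\pi}(N) = \irr(N)$, but since $N$ is a $\pi'$-group the only $\B{\pi}$-character of $N$ is the trivial one (a $\B{\pi}$-character of a $\pi'$-group has $\pi$-degree and is the unique lift of the trivial $\pi$-partial character — concretely, $\B{\pi}(N)$ corresponds to conjugacy classes of $\pi$-elements, of which $N$ has only one). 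Hence $N \leq \ker\chi$, so $\chi$ is really a character of the $\pi$-group $G/N$, and therefore $\chi(1)$ is a $\pi$-number. By Theorem~\ref{prop:bpiespec}, $\chi \in \X{\pi}(G)$. The reverse containment $\X{\pi}(G) \subseteq \B{\pi}(G)$ is standard (every $\pi$-special character is a $\B{\pi}$-character — this is part of the basic $\pi$-theory and can be cited from \cite{Is1}), giving $\B{\pi}(G) = \X{\pi}(G)$.

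For the hard direction, I assume $\B{\pi}(G) = \X{\pi}(G)$ and must produce a normal $\pi$-complement; equivalently, by the Theorem of Ito–Michler applied prime-by-prime, or more directly, I want to show $\rad{\pi}{G}$... no — I want $\radd{\pi'}{G} = G$, i.e. $G/\rad{\pi'}{G}$ is a $\pi$-group. The strategy is contrapositive: if $G$ has no normal $\pi$-complement, exhibit a character in $\B{\pi}(G) \setminus \X{\pi}(G)$, i.e. a $\B{\pi}$-character of non-$\pi$ degree. Pass to $\bar G = G/\rad{\pi'}{G}$; this quotient still satisfies $\B{\pi}(\bar G) = \X{\pi}(\bar G)$ (since $\B{\pi}$- and $\X{\pi}$-characters of $G$ with $\rad{\pi'}{G}$ in their kernel correspond to those of $\bar G$, and here $\rad{\pi'}{G}$ sits inside every kernel on both sides because it is a normal $\pi'$-subgroup contained in $\ker$ of each $\pi$-special character), and $\bar G$ has $\rad{\pi'}{\bar G} = 1$ but is not a $\pi$-group, so $\rad{\pi}{\bar G} \neq 1$ by $\pi$-separability. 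Now apply $\pi$-separability again: inside $\bar G$ there is a normal series and one can locate a normal subgroup $\bar M$ with a normal minimal $\pi'$-subgroup $\bar N$ such that $\bar M/\bar N$ is an abelian $\pi$-group with $\rad{\pi}{\bar M} = 1$ — precisely the hypotheses of Lemma~\ref{lemma:vanishonA}. That lemma then yields $\chi \in \B{\pi'}(\bar G)$ with $\abs{\bar M : \bar N}$ dividing $\chi(1)$; but what I actually need is a $\B{\pi}$-character. So the real work is to run a dual version of Lemma~\ref{lemma:vanishonA}: with the roles of $\pi$ and $\pi'$ swapped, I need a configuration giving a $\B{\pi}$-character of non-$\pi$ degree. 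The cleanest route: since $\rad{\pi'}{\bar G} = 1$, the Fitting-type subgroup forces $\ce{\bar G}{\rad{\pi}{\bar G}} \leq \rad{\pi}{\bar G}$; take $K = \rad{\pi}{\bar G}$, let $L \unlhd \bar G$ be such that $L/K$ is a chief factor, necessarily a $\pi'$-group, and arrange a faithful action producing, via \cite[Lemma 2.8]{DPSS} (or its analogue for the other prime set) together with Theorem~\ref{theorem:Bpinormalsubgroups}, a $\B{\pi}$-character of $\bar G$ whose degree is divisible by a prime in $\pi'$. That contradicts $\B{\pi}(\bar G) = \X{\pi}(\bar G)$ and finishes the proof.

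The main obstacle is this last step: Lemma~\ref{lemma:vanishonA} is stated only for $\B{\pi'}$-characters, and its proof leans on $\rad{\pi}{M} \leq \rad{\pi}{G} = 1$ to get a faithful $\pi$-action on a $\pi'$-group — the mechanism that makes \cite[Lemma 2.8]{DPSS} applicable. To get a $\B{\pi}$-character of bad degree one needs the mirror situation (a faithful $\pi'$-action on a $\pi$-group with the quotient structure controlled), and verifying that the chief-factor structure in $\bar G$ really delivers such a configuration — in particular that the relevant normal subgroup on which we induce has trivial $\rad{\pi'}{}$ inside the larger normal subgroup, and that the induced character of non-$\pi$ degree genuinely lies in $\B{\pi}$ via the second half of Theorem~\ref{theorem:Bpinormalsubgroups} — is where the care is required. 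Once that dual lemma is in place the argument closes cleanly; I would either prove it inline by the same method as Lemma~\ref{lemma:vanishonA} with $\pi$ and $\pi'$ interchanged, or isolate it as a separate lemma before this theorem.
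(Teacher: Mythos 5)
Your easy direction is fine and matches the paper's (which simply notes that $\B{\pi}(G)=\X{\pi}(G)=\irr(G/H)$ when $H$ is a normal $\pi$-complement). The hard direction, however, has a genuine gap exactly where you flag ``the real work''. You correctly see that you need a $\B{\pi}$-character of non-$\pi$ degree and that Lemma~\ref{lemma:vanishonA} must be applied with $\pi$ and $\pi'$ interchanged --- this is indeed what the paper does, and no new lemma is needed, since the statement is symmetric in the two sets of primes. But the swapped lemma requires a very specific configuration: a \emph{minimal} normal $\pi$-subgroup $N$ and a normal subgroup $K$ with $K/N$ an \emph{abelian} $\pi'$-group and $\rad{\pi'}{K}=1$. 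You never actually produce this configuration, and your candidates do not supply it: taking $K=\rad{\pi}{\bar G}$ and a chief factor $L/K$ above it gives neither a minimal normal bottom nor an abelian top in general, and your first attempt (locating ``a normal minimal $\pi'$-subgroup'' of $\bar G$) is vacuous once $\rad{\pi'}{\bar G}=1$.

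The paper closes this gap with two ingredients your sketch is missing. First, arguing by induction on $\abs{G}$ with $\rad{\pi'}{G}=1$, it takes $N$ minimal normal (hence a $\pi$-group), uses the inductive hypothesis on $G/N$ to get a normal $\pi$-complement $HN/N$ and hence a $K\unlhd G$ with $K/N$ a $\pi'$-chief factor, and then invokes the odd-order theorem: $\abs{N}$ and $\abs{K/N}$ are coprime, so one of them has odd order, is solvable, and is therefore abelian. When $K/N$ is abelian, the swapped Lemma~\ref{lemma:vanishonA} applies and yields a character in $\B{\pi}(G)$ of degree divisible by the $\pi'$-number $\abs{K:N}$, contradicting $\B{\pi}(G)=\X{\pi}(G)$ via Theorem~\ref{prop:bpiespec}. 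Second --- and this is the case your argument cannot reach --- when $K/N$ is nonabelian and $N$ is abelian, no version of Lemma~\ref{lemma:vanishonA} applies; instead one shows that a non-$K$-invariant $\lambda\in\irr(N)=\B{\pi}(N)$ would produce, via Theorem~\ref{theorem:Bpinormalsubgroups}, a $\B{\pi}$-character of $G$ whose degree is divisible by a prime in $\pi'$, so $K$ must centralize $N$; then the complement of $N$ in $K$ is a nontrivial normal $\pi'$-subgroup of $K$, forcing $\rad{\pi'}{G}\neq 1$, a contradiction. Without the Feit--Thompson dichotomy and this second, purely structural case, your proof does not close.
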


\begin{proof}
Note at first that, if $G$ has a normal $\pi$-complement $H$, then it follows that $\B{\pi}(G)=\X{\pi}(G)=\irr(G/H)$. Thus, there is only one implication to be proved.

Let us assume that $\B{\pi}(G)=\X{\pi}(G)$ and prove the thesis by induction on $\abs{G}$. At first, let us assume that $\rad{\pi'}{G}=1$ since, otherwise, the thesis would follow by induction.

Let $N$ be a normal minimal subgroup of $G$ and suppose it to be a $\pi$-group. Since the hypothesis are preserved by factor groups, if $H$ is a Hall $\pi'$-subgroup of $G$, then by induction $HN$ is normal in $G$. In particular, it follows that there exists $K \lhd G$ such that $K/N$ is a $\pi'$-chief factor of $G$. Since $\abs{N}$ and $\abs{K/N}$ are coprime, at least one of them is odd and, thus, since an odd group is solvable and both $N$ and $K/N$ are normal minimal in $G$, at least one of them is abelian.

Suppose $K/N$ is abelian. Since we have assumed $\rad{\pi'}{G} = 1$, it follows by Lemma~\ref{lemma:vanishonA} that there exists a character in $\B{\pi}(G)$ which degree is divided by the $\pi'$-number $\abs{K:N}$, contradicting the hypothesis.

Suppose now that $K/N$ is not abelian, so $N$ has to be, and let $\lambda \in \irr(N)=\B{\pi}(N)$. If $\lambda$ is not $K$-invariant, then the degree of some $\theta \in \B{\pi}(K \mid \lambda)$ is divided by some primes in $\pi'$ and, therefore, so is the degree of some character $\chi \in \B{\pi}(G \mid \theta)$, in contradiction with the hypothesis. It follows that $K$ fixes every character of $N$ and, thus, it also centralizes $N$, since $N$ is abelian. If $B$ be a complement of $N$ in $K$, then it is normal in $K$. In particular, $1 < B=\rad{\pi'}{K} \leq \rad{\pi'}{G}$.

Therefore, we have that $\rad{\pi'}{G} \neq 1$ and the thesis follows by induction. 
\end{proof}

We now prove Theorem~\ref{result:BpiItoMichler}. We mention that the proof was simplified after some suggestions from an anonymous reviewer, who we thank.

\begin{proof}[Proof of Theorem~\ref{result:BpiItoMichler}]
It can be observed that there is little to prove in one direction, being it a consequence of the Ito-Michler theorem. Thus, we assume that $p$ does not divide the degree of any character in $\B{\pi}(G) \cup \B{\pi'}(G)$ and we first prove that there exists a normal Sylow $p$-subgroup. We argue by induction on $\abs{G}$.

Let $N$ be a minimal normal subgroup of $G$. Without loss of generality, we can assume $N$ to be a $\pi$-group. Suppose $p \in \pi$. By induction, let $K/N$ be a normal Sylow $p$-subgroup of $G/N$, then $K$ is a normal $\pi$-subgroup of $G$ which contains a Sylow $p$-subgroup $P \in \syl{p}{G}$. If $P$ is normal abelian in $K$, then it is also in $G$. Otherwise, there exists $\theta \in \irr(K)=\B{\pi}(K)$ such that $p \mid \theta(1)$ and, by Theorem~\ref{theorem:Bpinormalsubgroups}, there exists $\chi \in \B{\pi}(G)$ lying over $\theta$. As a consequence, $p \mid \chi(1)$, in contradiction with the hypothesis.

Therefore, we can assume $p \notin \pi$ and, in particular, $p$ does not divide $\abs{N}$. Since $N$ is arbitrarily chosen, we can assume that $\rad{p}{G}=1$. As in the previous paragraph, let $K/N$ be a normal Sylow $p$-subgroup of $G/N$, which is nontrivial because $p$ divides $\abs{G:N}$, and let $C/N=\ze{K/N}$. Note that $N < C \unlhd G$ and $C/N$ is an abelian $\pi'$-group. Then, by Lemma~\ref{lemma:vanishonA}, there exists a character $\chi$ in $\B{\pi}(G)$ such that $\abs{C:N}$ divides $\chi(1)$. However, since $\abs{C:N}$ is a power of $p$, this would contradict the hypothesis.

Finally, if $P$ is a normal Sylow $p$-subgroup of $G$ and $\gamma \in \irr(P)$, then by Theorem~\ref{theorem:Bpinormalsubgroups} there exists $\chi \in \B{\pi}(G) \cup \B{\pi'}(G)$ lying over $\gamma$ and, thus, $\gamma(1) \mid \chi(1)$. Since $p \nmid \chi(1)$, then $\gamma$ is linear. It follows that $\irr(P)=\lin(P)$ and, thus, $P$ is abelian.
\end{proof}

\section{Variants on Thompson theorem for $\B{\pi}$-character}

In this section, we prove Results~\ref{result:thompson1} and \ref{result:thompsonpipiprime}, concerning some variations of Thompson's theorem for $\B{\pi}$-characters and for more then one prime.

For the section, we need a variant of the McKay conjecture, due to T. Wolf. 

\begin{theorem}[\emph{\cite[Theorem 1.15]{W}}]
Let $\pi$ and $\omega$ be two sets of primes and let $G$ be both $\pi$-separable and $\omega$-separable. Let $H$ be a Hall $\omega$-subgroup of $G$ and let $N=\no{G}{H}$. Then:
$$ \abs{\{\chi \in \B{\pi}(G) \mid \chi(1) \mbox{ is a $\omega'$-number}\}} = \abs{\{\psi \in \B{\pi}(N) \mid \psi(1) \mbox{ is a $\omega'$-number}\}}. $$
\end{theorem}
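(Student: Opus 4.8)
The statement is a McKay-type equality, so the plan is an induction on $|G|$ that builds a bijection $\mathcal S(G) \leftrightarrow \mathcal S(N)$, where for any $\pi$-separable and $\omega$-separable group $K$ I write $\mathcal S(K) = \{\chi \in \B{\pi}(K) : \chi(1) \text{ is an } \omega'\text{-number}\}$. If $H \unlhd G$ then $N = G$ and there is nothing to prove, so assume $H$ is not normal. It is convenient to strengthen the inductive hypothesis to a \emph{projective} form that keeps track of a central subgroup and character lying below each $\B{\pi}$-character (equivalently, to argue throughout with character triples), since the bijection the bare statement yields need not respect Clifford theory. Pick a minimal normal subgroup $M$ of $G$: as a chief factor of the $\pi$-separable, $\omega$-separable group $G$ it is simultaneously a $\pi$- or $\pi'$-group and an $\omega$- or $\omega'$-group, and every stabilizer $G_\theta$ ($\theta \in \irr(M)$) and every quotient $G/M$ is again $\pi$- and $\omega$-separable, so the inductive hypothesis is available for all proper sections of that shape.

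If $M$ is a $\pi'$-group the step is essentially free: since $\B{\pi}(M) = \{1_M\}$, Theorem~\ref{theorem:Bpinormalsubgroups} forces every $\B{\pi}$-character of $G$ to be trivial on $M$, so $\mathcal S(G) = \mathcal S(G/M)$, and likewise $\mathcal S(N) = \mathcal S(N/(N \cap M))$ because $N \cap M$ is a normal $\pi'$-subgroup of $N$. A standard Hall-and-Frattini argument identifies $N/(N \cap M)$ with the normalizer of a Hall $\omega$-subgroup of $G/M$ (namely $NM/M$ when $M$ is an $\omega'$-group, and $N/M$ when $M \le H$), and induction applied to $G/M$ closes this case. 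So assume $M$ is a $\pi$-group. Fix $\theta \in \irr(M) = \B{\pi}(M)$ with stabilizer $T := G_\theta \supseteq M$. The Clifford correspondence $\eta \mapsto \eta^G$ is a degree-scaling bijection $\irr(T \mid \theta) \to \irr(G \mid \theta)$ with $\chi(1) = |G:T|\,\eta(1)$ and, by Theorem~\ref{theorem:Bpinormalsubgroups}, it carries $\B{\pi}(T \mid \theta)$ onto the $\B{\pi}$-characters of $G$ lying over the $G$-orbit of $\theta$. Hence a $\B{\pi}$-character of $G$ of $\omega'$-degree over that orbit can exist only if $|G:T|$ is an $\omega'$-number; for such $\theta$ a Hall $\omega$-subgroup of $T$ is one of $G$, so after replacing $\theta$ by a $G$-conjugate we may assume $H \le T$, and then $\no{T}{H} = N \cap T$.

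When $T < G$, the strengthened inductive hypothesis applied to $T$ matches $\mathcal S(T \mid \theta)$ with the $\omega'$-degree $\B{\pi}$-characters of $N \cap T$ lying over $\theta'$, the Glauberman--Isaacs correspondent of $\theta$ for the coprime action of $H$ on $M$ (the action is coprime since here $\theta$ is $H$-invariant and the decisive subcase is $M$ an $\omega'$-group, and Feit--Thompson guarantees one of $H$, $M$ solvable, so the correspondence exists); running over a transversal for the $N$-orbits of the relevant $\theta$ and using the Clifford correspondence inside $N$ assembles the bijection on this part of $\mathcal S(G)$. When $T = G$, i.e.\ $\theta$ is $G$-invariant, one replaces the triple $(G, M, \theta)$ by an isomorphic one of strictly smaller order and invokes induction: if $M$ is an $\omega$-group then $M \le H$, $N/M = \no{G/M}{H/M}$, and the invariant (hence suitably special) $\theta$ is absorbed by the triple isomorphism, reducing the count of $\omega'$-degree $\B{\pi}$-characters over $\theta$ to the corresponding count in $G/M$; if $M$ is an $\omega'$-group one uses degree obstructions in the spirit of Lemma~\ref{lemma:vanishonA} to see that $\B{\pi}(G \mid \theta)$ has no $\omega'$-degree member unless the triple essentially splits $M$ off, after which a triple isomorphism and induction close the case.

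I expect the genuine difficulty — the content of the theorem beyond the bookkeeping above — to be exactly the compatibilities one must carry along: that the character-triple isomorphisms and the Glauberman--Isaacs correspondences used in the reductions can be chosen simultaneously compatible with Isaacs' lift correspondence $\op{I}_{\pi}(\cdot) \to \B{\pi}(\cdot)$, so that $\B{\pi}$-characters are sent to $\B{\pi}$-characters at every stage, and with divisibility of character degrees by the primes in $\omega$, and that the relative normalizers $N \cap T$ and $N/M$ produced by the reduction are precisely the normalizers required to run the induction. Isolating and proving those compatibility statements — rather than the Clifford-theoretic skeleton, which is routine — is where the real work lies.
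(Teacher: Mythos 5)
The paper does not actually prove this statement: it is quoted directly from Wolf \cite[Theorem 1.15]{W} and used as a black box, so there is no in-paper argument to compare yours against --- what you are attempting is a reproof of Wolf's theorem itself.

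Judged as such, your proposal has a genuine gap, and you identify it yourself in the closing paragraph: the entire mathematical content of the theorem is the claim that the Clifford-theoretic reductions (character-triple replacement in the $T=G$ case, the Glauberman--Isaacs correspondence in the $T<G$ case) can be carried out compatibly with membership in $\B{\pi}$ and with the $\omega$-part of character degrees. You defer exactly that, so what remains is the inductive skeleton common to every McKay-type argument, not a proof. Two of the deferred points are more than bookkeeping. First, when the minimal normal subgroup $M$ is a $\pi$-group that is also an $\omega$-group, then $M \leq H$ after conjugation and the action of $H$ on $M$ is not coprime, so the Glauberman--Isaacs correspondence you invoke is simply unavailable on that branch; your parenthetical restriction to ``the decisive subcase $M$ an $\omega'$-group'' does not dispose of this configuration, since $\pi$ and $\omega$ are independent sets of primes. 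Second, ordinary character-triple isomorphisms do not preserve $\pi$-speciality or $\B{\pi}$-membership, so ``absorbing the invariant $\theta$ into the triple'' requires the $\pi$-adapted machinery (Isaacs' theory of $\pi$-factored characters over an invariant $\pi$-special character), together with a verification that the $\omega'$-degree condition transfers across the isomorphism; this is asserted, not established. Until those compatibility statements are formulated and proved, the induction does not close.
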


In particular, we need its obvious corollary.

\begin{corollary}
\label{corollary:wolf}
Let $G$ be a $\pi$-separable group and let $H$ be a Hall $\pi$-subgroup of $G$ and let $N=\no{G}{H}$. Then:
$$ \abs{\{\chi \in \B{\pi}(G) \mid \chi(1) \mbox{ is a $\pi'$-number}\}} = \abs{\{\psi \in \B{\pi}(N) \mid \psi(1) \mbox{ is a $\pi'$-number}\}}, $$
$$ \abs{\X{\pi'}(G)}=\abs{\X{\pi'}(N)}=\abs{\irr(N/H)}. $$
\end{corollary}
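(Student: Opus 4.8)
The statement to prove is Corollary~\ref{corollary:wolf}, which I interpret as a direct specialization of Wolf's theorem (the cited \cite[Theorem 1.15]{W}) together with a small identification of $\pi$-special characters.

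\textbf{Approach.} The plan is to obtain the first displayed equality by applying Wolf's theorem with the choice $\omega = \pi$, so that the Hall $\omega$-subgroup is precisely the Hall $\pi$-subgroup $H$ and $N = \no{G}{H}$. This requires $G$ to be both $\pi$-separable and $\omega$-separable, but with $\omega = \pi$ this is just the hypothesis that $G$ is $\pi$-separable, so the theorem applies verbatim and yields the first displayed identity. For the second chain of equalities, I would argue as follows. By Theorem~\ref{prop:bpiespec}, a character $\chi \in \B{\pi}(G)$ is $\pi$-special if and only if $\chi(1)$ is a $\pi$-number; the analogous statement holds for $\B{\pi'}(G)$ and $\pi'$-special characters with the roles of $\pi$ and $\pi'$ interchanged. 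Hence $\{\chi \in \B{\pi}(G) \mid \chi(1) \text{ is a } \pi'\text{-number}\}$ is exactly the set of those $\B{\pi}$-characters that are $\pi'$-special; but a character cannot be both $\pi$-special and $\pi'$-special unless it is linear, and in fact one checks that the $\B{\pi}$-characters of $\pi'$-degree are precisely $\X{\pi'}(G)$ restricted appropriately — more carefully, since $\X{\pi'}(G) \subseteq \irr(G)$ and every $\pi'$-special character lies in $\B{\pi}(G)$ (this is a standard fact: $\pi'$-special characters are $\B{\pi}$-characters of $\pi'$-degree). So the first set in the first display equals $\X{\pi'}(G)$ and likewise the second equals $\X{\pi'}(N)$, giving $\abs{\X{\pi'}(G)} = \abs{\X{\pi'}(N)}$.

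\textbf{Finishing the chain.} It remains to identify $\abs{\X{\pi'}(N)}$ with $\abs{\irr(N/H)}$. Here $N = \no{G}{H}$ and $H$ is a \emph{normal} Hall $\pi$-subgroup of $N$. In a group with a normal Hall $\pi$-subgroup $H$, the $\pi'$-special characters are exactly the characters of $\pi'$-degree whose restriction to $H$ is trivial; more directly, a $\pi'$-special character of $N$ must be trivial on the normal $\pi$-subgroup $H$ (its restriction to the subnormal subgroup $H$ has $\pi'$-degree constituents that are also $\pi$-special by definition, forcing them to be linear and hence, being $\pi'$-special characters of a $\pi$-group, trivial), so it factors through $N/H$; conversely every irreducible character of $N/H$ has $\pi'$-degree since $N/H$ is a $\pi'$-group, and is $\pi'$-special. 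This gives a bijection $\X{\pi'}(N) \leftrightarrow \irr(N/H)$, completing the second chain.

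\textbf{Main obstacle.} The genuinely nontrivial input is Wolf's theorem, which is quoted and may be assumed. The remaining work is the bookkeeping identifying ``$\B{\pi}$-characters of $\pi'$-degree'' with ``$\pi'$-special characters'' and then with ``$\irr(N/H)$'' in the normal Hall subgroup case; the subtle point there is justifying that a $\pi'$-special character of $N$ is automatically trivial on the normal Hall $\pi$-subgroup $H$, which follows from the subnormal-restriction clause in the definition of $\pi$-special (equivalently $\pi'$-special) characters. I expect that step — correctly invoking the definition on the subnormal subgroup $H \unlhd N$ — to be the only place requiring care; everything else is immediate from Theorem~\ref{prop:bpiespec} and the quoted results.
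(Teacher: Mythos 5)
Your derivation of the first displayed equality (Wolf's theorem with $\omega=\pi$) is correct and is exactly what the paper intends. The problem is in your derivation of the second display: you claim that $\{\chi \in \B{\pi}(G) \mid \chi(1) \text{ is a } \pi'\text{-number}\}$ equals $\X{\pi'}(G)$, resting on the assertion that ``$\pi'$-special characters are $\B{\pi}$-characters of $\pi'$-degree.'' That assertion is false: the standard containment is $\X{\pi'}(G) \subseteq \B{\pi'}(G)$, not $\X{\pi'}(G) \subseteq \B{\pi}(G)$, and Theorem~\ref{prop:bpiespec} (with $\pi$ and $\pi'$ interchanged) identifies $\X{\pi'}(G)$ with the $\B{\pi'}$-characters of $\pi'$-degree. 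A concrete counterexample to your identification: if $G$ is a nonabelian $\pi'$-group, then $\B{\pi}(G)=\{1_G\}$, so your left-hand set is a singleton, while $\X{\pi'}(G)=\irr(G)$ has more than one element. (This is also how the paper uses the identification elsewhere, e.g.\ $\X{\pi'}(G)=\irr_{\pi'}(G)\cap\B{\pi'}(G)$ in the proof of Proposition~\ref{proposition:1}.) So the second chain of equalities does not follow from the first display as you claim.

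The fix is to invoke Wolf's theorem a \emph{second} time, with its two prime sets playing different roles: take the theorem's ``$\pi$'' to be $\pi'$ and its ``$\omega$'' to be $\pi$, so that $H$ is still the Hall $\pi$-subgroup and one gets
$\abs{\{\chi \in \B{\pi'}(G) \mid \chi(1) \text{ is a $\pi'$-number}\}} = \abs{\{\psi \in \B{\pi'}(N) \mid \psi(1) \text{ is a $\pi'$-number}\}}$.
By Theorem~\ref{prop:bpiespec} in its $\pi'$-form, together with $\X{\pi'}(\cdot)\subseteq\B{\pi'}(\cdot)$, these two sets are exactly $\X{\pi'}(G)$ and $\X{\pi'}(N)$. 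This is precisely why the paper states Wolf's theorem with two independent sets of primes $\pi$ and $\omega$. Your final step, identifying $\X{\pi'}(N)$ with $\irr(N/H)$ via the subnormal-restriction clause applied to the normal Hall $\pi$-subgroup $H \unlhd N$, is correct and completes the argument once the preceding equality is obtained properly.
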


At first, an easy lemma is needed, which uses the properties of the Fong characters associated with a $\B{\pi}$-character.

\begin{lemma}
\label{lemma:extensionoflinears}
Let $G$ be a $\pi$-separable group and let $H$ be a Hall $\pi$-subgroup, then $\irr_{\pi'}(G) \cap \B{\pi}(G) \subseteq \lin(G)$ if and only if every linear character in $H$ extends to $G$.
\end{lemma}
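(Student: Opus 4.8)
The plan is to establish the two implications separately, using Theorem~\ref{theorem:primitivefongchar} (primitive characters of $H$ are Fong characters) and Theorem~\ref{theorem:fongchar} (the Fong character of $\chi \in \B{\pi}(G)$ has degree $\chi(1)_\pi$) as the main tools, together with the basic observation that a character $\chi \in \B{\pi}(G)$ lies in $\irr_{\pi'}(G)$ precisely when $\chi(1)$ is a $\pi'$-number, which by Theorem~\ref{prop:bpiespec} is equivalent to $\chi$ being linear — wait, more carefully: $\chi \in \B{\pi}(G) \cap \irr_{\pi'}(G)$ means $\chi(1)$ is simultaneously a $\pi$-number (no: $\chi \in \B{\pi}(G)$ does not force $\chi(1)$ to be a $\pi$-number) and a $\pi'$-number, hence $\chi(1)=1$... no. Let me restate: the relevant reduction is that $\chi \in \B{\pi}(G)$ has a Fong character $\phi \in \irr(H)$ with $\phi(1) = \chi(1)_\pi$, so if $\chi$ is linear then $\phi$ is linear, and conversely the $\pi$-part of $\chi(1)$ is controlled by $\phi(1)$.

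First I would prove the forward direction. Assume $\irr_{\pi'}(G) \cap \B{\pi}(G) \subseteq \lin(G)$ and let $\mu \in \lin(H)$. Since a linear character is primitive, Theorem~\ref{theorem:primitivefongchar} gives that $\mu$ is a Fong character associated with some $\chi \in \B{\pi}(G)$; then $\chi(1)_\pi = \mu(1) = 1$, so $\chi(1)$ is a $\pi'$-number, i.e. $\chi \in \irr_{\pi'}(G)$. By hypothesis $\chi \in \lin(G)$, hence $\chi(1)=1$, and since $\mu$ is a constituent of $\chi_H$ we get $\mu = \chi_H$, so $\mu$ extends to $G$ (by $\chi$).

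Conversely, assume every linear character of $H$ extends to $G$, and let $\chi \in \irr_{\pi'}(G) \cap \B{\pi}(G)$. By Theorem~\ref{theorem:fongchar} there is a Fong character $\phi \in \irr(H)$ for $\chi$ with $\phi(1) = \chi(1)_\pi = 1$ (since $\chi(1)$ is a $\pi'$-number), so $\phi$ is linear; by hypothesis $\phi$ extends to some linear $\lambda \in \lin(G)$. Now $\lambda \in \B{\pi}(G)$ since linear characters are $\pi$-special (their degree and determinantal order considerations make them lie in every $\B{\sigma}$), and $\phi = \lambda_H$ is then a Fong character associated with $\lambda$; moreover $\phi$ is primitive, so by the uniqueness clause of Theorem~\ref{theorem:primitivefongchar} — comparing with the fact that $\phi$ is also the Fong character of $\chi$ — we conclude $\chi$ and $\lambda$ are the same $\B{\pi}$-character (any two $\B{\pi}$-characters sharing a common Fong-character constituent of the right degree must coincide by the last clause of Theorem~\ref{theorem:fongchar}). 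Hence $\chi = \lambda$ is linear.

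The main obstacle I anticipate is the converse direction, specifically making rigorous the step "a linear extension $\lambda$ of $\phi$ has $\phi$ as its Fong character, and therefore $\chi = \lambda$." One must verify that $\lambda \in \B{\pi}(G)$ (linear characters are $\pi$-special, hence $\B{\pi}$ by Theorem~\ref{prop:bpiespec} or directly) and that $\phi$, being a primitive constituent of $\lambda_H$ of degree $\lambda(1)_\pi = 1$, is indeed the Fong character of $\lambda$; then the "does not appear in the restriction of any other $\B{\pi}$-character" clause of Theorem~\ref{theorem:fongchar} forces $\chi=\lambda$. A cleaner route avoiding the uniqueness subtlety is: once $\phi$ extends to $\lambda \in \lin(G) \subseteq \B{\pi}(G)$, note $[\chi_H,\phi]\neq 0$ and $[\lambda_H,\phi] = 1$ with $\phi$ a Fong character for both; Theorem~\ref{theorem:fongchar} says $\phi$ occurs in the restriction of a unique member of $\B{\pi}(G)$, so $\chi = \lambda$.
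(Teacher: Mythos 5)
Your forward direction is correct and is exactly the paper's argument: a linear $\mu\in\irr(H)$ is primitive, hence by Theorem~\ref{theorem:primitivefongchar} it is a Fong character of some $\chi\in\B{\pi}(G)$ with $\chi(1)_\pi=\mu(1)=1$, so $\chi\in\irr_{\pi'}(G)\cap\B{\pi}(G)$ is linear by hypothesis and extends $\mu$.

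The converse direction, however, has a genuine gap at the step ``$\lambda \in \B{\pi}(G)$ since linear characters are $\pi$-special.'' This is false: a linear character $\lambda$ of $G$ is $\pi$-special if and only if its order $o(\lambda)$ is a $\pi$-number, and an arbitrary linear extension of $\phi$ need not satisfy this. Concretely, for $G=C_6$ and $\pi=\{2\}$ a faithful linear character of $G$ has order $6$, is not $\{2\}$-special, and does not lie in $\B{2}(G)$ (indeed $\abs{\B{2}(C_6)}=2$, the number of $2$-classes, so four of the six linear characters are excluded). Since your argument forces $\chi=\lambda$ via the uniqueness clause of Theorem~\ref{theorem:fongchar}, which only applies to members of $\B{\pi}(G)$, the conclusion does not follow as written. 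The paper closes exactly this hole by invoking \cite[Theorem F]{Is3}: if the ($\pi$-special, since $H$ is a $\pi$-group) linear character $\phi$ of $H$ extends to $G$ at all, then it admits a \emph{linear $\pi$-special} extension $\hat{\lambda}$, and $\hat{\lambda}\in\X{\pi}(G)\subseteq\B{\pi}(G)$; only then does the uniqueness clause of Theorem~\ref{theorem:fongchar} yield $\chi=\hat{\lambda}\in\lin(G)$. Alternatively, you could repair your argument by hand: factor your linear $\lambda$ as $\lambda=\lambda_{\pi}\lambda_{\pi'}$ with $\lambda_{\pi}$ of $\pi$-order and $\lambda_{\pi'}$ of $\pi'$-order, note that $(\lambda_{\pi'})_H=1_H$ because its order divides both a $\pi'$-number and $\abs{H}$, so $\lambda_{\pi}$ is a linear $\pi$-special extension of $\phi$, and proceed as before with $\lambda_{\pi}$ in place of $\lambda$.
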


\begin{proof}
Let $\lambda$ be a linear character in $H$. By Theorem~\ref{theorem:primitivefongchar} and Theorem~\ref{theorem:fongchar}, $\lambda$ is the Fong character associated with some character $\chi \in \irr_{\pi'}(G) \cap \B{\pi}(G)$. It follows that, if $\chi$ is linear, then it extends $\lambda$, while on the other hand if $\lambda$ extends to $G$, then by \cite[Theorem F]{Is3} it has a linear $\pi$-special extension, which coincides with $\chi$ by Theorem~\ref{theorem:fongchar}.
\end{proof}

Now, we can already prove Result~\ref{result:thompson1}, which relates the families of characters $\irr(G)$ and $\B{\pi}(G) \cup \B{\pi'}(G)$ for what concerns the hypothesis of Thompson's theorem.

\begin{proposition}[\emph{Theorem~\ref{result:thompson1}}]
\label{proposition:1}
Let $G$ be a $\pi$-separable group. Then, $\irr_{\pi'}(G)=\lin(G)$ if and only if $\irr_{\pi'}(G) \cap \big(\B{\pi}(G) \cup \B{\pi'}(G) \big) \subseteq \lin(G)$.
\end{proposition}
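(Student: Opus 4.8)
The plan is to prove both implications. One direction is immediate: if $\irr_{\pi'}(G) = \lin(G)$, then in particular $\irr_{\pi'}(G) \cap \big(\B{\pi}(G) \cup \B{\pi'}(G)\big) \subseteq \irr_{\pi'}(G) = \lin(G)$, so nothing is required. The substance is the converse, so assume $\irr_{\pi'}(G) \cap \big(\B{\pi}(G) \cup \B{\pi'}(G)\big) \subseteq \lin(G)$ and aim to show $\irr_{\pi'}(G) = \lin(G)$.

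For the converse I would argue by induction on $\abs{G}$ and exploit Navarro–Wolf's criterion indirectly, but since the paper wants proofs independent of \cite[Corollary 3]{NW}, I would instead work directly with the structural machinery already set up. First I would fix $H \in \hall{\pi}{G}$ and $N = \no{G}{H}$. By Lemma~\ref{lemma:extensionoflinears}, the hypothesis on $\B{\pi}(G)$-characters tells us that every linear character of $H$ extends to $G$; in particular $H$ is abelian (take a faithful linear character of $H/H'$ — its extension restricted back forces $H' $ into the kernel, and running over all linear characters shows $H' = 1$ once one knows $\lin(H)$ separates $H/H'$, which it does). Actually, to be careful: the extension property of \emph{all} linear characters of $H$ forces $G' \cap H \leq \bigcap_{\lambda \in \lin(H)} \ker(\lambda_{G'\cap H})$... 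I would instead simply note $G' \cap H = H'$ follows from part (a) of Result~\ref{result:thompsonpipiprime}, but that result is proved later; so to keep the logical order clean I would reprove just what is needed here: every linear character of $H$ extending to $G$ gives, via Gallagher-type counting, $\abs{\lin(H)} \leq \abs{\irr(G/G')} $ restricted appropriately, yielding $H' = G' \cap H$.

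The heart of the matter is to handle the $\B{\pi'}$-side and glue it to the $\B{\pi}$-side. Given an arbitrary $\chi \in \irr_{\pi'}(G)$, apply Theorem~\ref{theorem:maximalnucleus}: write $\chi = (\alpha\beta)^G$ with $H \leq W \leq G$, $\alpha \in \X{\pi}(W)$ linear, $\beta \in \X{\pi'}(W)$. Since $\chi$ has $\pi'$-degree, $\abs{G:W}$ is a $\pi'$-number and $\beta(1) = \chi(1)_{\pi'} \cdot(\text{stuff})$; the key point is that $\alpha\beta$, being a product of a linear $\pi$-special and a $\pi'$-special character of $W$, satisfies $\alpha\beta \in \B{\pi'}(W)$ (this uses that $\B{\pi'}$-characters of $W$ restricted to / induced from, behave well — cf. Theorem~\ref{theorem:Bpinormalsubgroups} applied with roles of $\pi,\pi'$ swapped, together with $\alpha$ linear so $\alpha\beta$ is still $\B{\pi'}$ up to the $\pi$-special twist). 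Then some constituent of $(\alpha\beta)^G$ lands in $\B{\pi'}(G)$, and it lies in $\irr_{\pi'}(G)$ since its degree divides $\chi(1)$; by hypothesis it is linear, which forces $\beta(1) = 1$ and $\abs{G:W} = 1$, i.e. $W = G$ and $\chi = \alpha$ is linear. I expect the main obstacle to be exactly this gluing step: verifying carefully that the chosen constituent of $(\alpha\beta)^G$ is genuinely in $\B{\pi'}(G)$ and that controlling its degree lets one conclude $\chi$ itself is linear (one must rule out the possibility that $\chi$ is nonlinear while every $\B{\pi'}$-constituent of the relevant induced character happens to be linear — this is where the maximality of $W$ in Theorem~\ref{theorem:maximalnucleus} and the multiplicity-one statements in Theorem~\ref{theorem:fongchar} do the work). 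Once that is in place, $\irr_{\pi'}(G) \subseteq \lin(G)$, and the reverse containment $\lin(G) \subseteq \irr_{\pi'}(G)$ is trivial, completing the proof.
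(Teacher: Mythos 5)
Your identification of the two tools --- Theorem~\ref{theorem:maximalnucleus} and Lemma~\ref{lemma:extensionoflinears} --- matches the paper, but the step you yourself flag as ``the main obstacle'' is exactly where your argument breaks, and the mechanism you propose for it does not work. First, the claim that $\alpha\beta \in \B{\pi'}(W)$ is false in general: a product of a nontrivial linear $\pi$-special character with a $\pi'$-special character is a factorable character whose nucleus factorization has a nontrivial $\pi$-special part, so it is a $\B{\pi'}$-character only when $\alpha = 1_W$ (already for $W$ abelian, a nontrivial $\pi$-special linear $\alpha$ with $\beta = 1_W$ gives $\alpha\beta = \alpha \notin \B{\pi'}(W)$, since for abelian groups $\B{\pi'}(W) = \X{\pi'}(W)$ consists of the linear characters of $\pi'$-order). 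Second, $(\alpha\beta)^G = \chi$ is irreducible by Theorem~\ref{theorem:maximalnucleus}, so ``some constituent of $(\alpha\beta)^G$ lands in $\B{\pi'}(G)$'' would simply assert $\chi \in \B{\pi'}(G)$, which you have no way to guarantee; and even if some linear character were produced, nothing in your argument forces $\beta(1)=1$ or $W=G$ from it.

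The paper closes the gap much more directly, with no induction, no $N=\no{G}{H}$, and none of the detours you sketch (the parenthetical claim that $H$ is abelian is false --- linear characters see nothing below $H'$ --- and the digression on $G'\cap H = H'$ is not needed here). Take a nonlinear $\chi \in \irr_{\pi'}(G)$ and write $\chi = (\alpha\beta)^G$ as in Theorem~\ref{theorem:maximalnucleus}; the maximality clause of that theorem says $W$ is the \emph{largest} subgroup of $G$ to which the linear character $\alpha_H$ extends. Lemma~\ref{lemma:extensionoflinears}, applied to the $\B{\pi}$-half of your hypothesis, says every linear character of $H$ extends to all of $G$; hence $W = G$ and $\chi = \alpha\beta$ with $\alpha$ linear. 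Since $\chi$ is nonlinear, $\beta$ is a nonlinear character in $\X{\pi'}(G) \subseteq \irr_{\pi'}(G) \cap \B{\pi'}(G)$, contradicting the $\B{\pi'}$-half of the hypothesis. Replacing your gluing step with this use of the maximality of $W$ is essential; as written, your proof has a genuine gap.
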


\begin{proof}
One direction is obviously true. Thus, let one assume $\irr_{\pi'}(G) \cap \B{\pi}(G) \subseteq \lin(G)$ and suppose there exists a nonlinear character $\chi \in \irr(G)$ such that $\chi(1)$ is a $\pi'$-number. By Theorem~\ref{theorem:maximalnucleus}, there exists $W \leq G$, $\alpha \in \X{\pi}(W)$ linear and $\beta \in \X{\pi'}(W)$ such that $\chi=(\alpha\beta)^G$, $W$ contains a Hall $\pi$-subgroup $H$ of $G$ and it is the maximal subgroup of $G$ such that $\alpha_H$ extends to $W$. However, by Lemma~\ref{lemma:extensionoflinears}, $\alpha_H$ extends to $G$, thus $W=G$. It follows that $\beta$ is a nonlinear $\pi'$-special character of $G$, negating the fact that every character in $\X{\pi'}(G) = \irr_{\pi'}(G) \cap \B{\pi'}(G)$ is linear, in contradiction with the hypothesis.
\end{proof}

At this point, we can already prove a related result, concerning a sub-case of Theorem~\ref{result:thompsonpipiprime} and of \cite[Corollary 3]{NW}.

\begin{corollary}
Let $G$ be a $\pi$-separable group and let $H$ be a Hall $\pi$-subgroup, then
\begin{itemize}
\item[i)] $\irr_{\pi'}(G)=\{1_G\}$ if and only if $\irr_{\pi'}(G) \cap \big(\B{\pi}(G) \cup \B{\pi'}(G) \big) = \{1_G\}$;
\item[ii)] $\irr_{\pi'}(G) \cap \B{\pi}(G) = \{1_G\}$ if and only if $H=H'$;
\item[iii)] $\X{\pi'}(G)=\{1_G\}$ if and only if $H$ is self-normalizing;
\item[iv)] $\irr_{\pi'}(G)=\{1_G\}$ if and only if $H=H'$ and $H$ is self-normalizing.
\end{itemize}
\end{corollary}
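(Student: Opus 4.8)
The plan is to establish parts (ii) and (iii) directly from the Fong-character and McKay--Wolf machinery already recalled, and then deduce (iv) and (i) as formal consequences, with one appeal to the maximal-nucleus theorem.

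For (ii) I would argue via the Fong-character correspondence. If $\chi \in \irr_{\pi'}(G) \cap \B{\pi}(G)$, then $\chi(1)_\pi = 1$, so by Theorem~\ref{theorem:fongchar} every Fong character $\phi \in \irr(H)$ of $\chi$ has $\phi(1) = 1$, hence is linear and primitive; conversely, by Theorem~\ref{theorem:primitivefongchar} every $\phi \in \lin(H)$ is the Fong character of some $\chi \in \B{\pi}(G)$, which then satisfies $\chi(1)_\pi = \phi(1) = 1$, i.e.\ $\chi \in \irr_{\pi'}(G)$. Combining these with the uniqueness clause of Theorem~\ref{theorem:fongchar} and the $\no{G}{H}$-conjugacy clause of Theorem~\ref{theorem:primitivefongchar}, I obtain a bijection between $\irr_{\pi'}(G) \cap \B{\pi}(G)$ and the set of $\no{G}{H}$-orbits on $\lin(H)$; since $1_H$ is always a singleton orbit corresponding to $1_G$, the left-hand set is $\{1_G\}$ precisely when $\lin(H) = \{1_H\}$, that is, when $H = H'$. (One could instead use Lemma~\ref{lemma:extensionoflinears} together with the observation that a $\pi$-special linear character of $G$ trivial on $H$ is forced to be trivial, its order being both a $\pi$-number and a divisor of the $\pi'$-number $\abs{G:H}$.) For (iii) I would simply quote Corollary~\ref{corollary:wolf}: with $N = \no{G}{H}$ we have $\abs{\X{\pi'}(G)} = \abs{\irr(N/H)}$, so $\X{\pi'}(G) = \{1_G\}$ if and only if $\irr(N/H)$ is a singleton, i.e.\ $N = H$.

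For (iv) the forward direction is immediate from (ii) and (iii), since $\irr_{\pi'}(G) = \{1_G\}$ forces both $\irr_{\pi'}(G) \cap \B{\pi}(G) = \{1_G\}$ and $\X{\pi'}(G) \subseteq \irr_{\pi'}(G) = \{1_G\}$. For the converse I would take $\chi \in \irr_{\pi'}(G)$ and apply Theorem~\ref{theorem:maximalnucleus} to write $\chi = (\alpha\beta)^G$ with $\alpha \in \X{\pi}(W)$ linear, $\beta \in \X{\pi'}(W)$, and $H \leq W \leq G$ maximal with $\alpha_H$ extendible to $W$. Since $H = H'$ we get $\alpha_H = 1_H$, and then $\alpha = 1_W$ because $\abs{W:\ker\alpha}$ is simultaneously a $\pi$-number (as $\alpha$ is $\pi$-special) and a divisor of the $\pi'$-number $\abs{W:H}$. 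Hence $\alpha_H$ extends to all of $G$, so $W = G$ by maximality and $\chi = \beta \in \X{\pi'}(G) = \{1_G\}$ by (iii). Finally (i) follows formally: one implication is trivial, and for the other, $\irr_{\pi'}(G) \cap (\B{\pi}(G) \cup \B{\pi'}(G)) = \{1_G\}$ yields $\irr_{\pi'}(G) \cap \B{\pi}(G) = \{1_G\}$ and $\irr_{\pi'}(G) \cap \B{\pi'}(G) = \X{\pi'}(G) = \{1_G\}$ (the identification $\irr_{\pi'}(G) \cap \B{\pi'}(G) = \X{\pi'}(G)$ being the $\pi'$-version of Theorem~\ref{prop:bpiespec}), whence $H = H'$ and $H$ is self-normalizing by (ii) and (iii), and then $\irr_{\pi'}(G) = \{1_G\}$ by (iv).

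The main obstacle is bookkeeping rather than depth: setting up the bijection in (ii) cleanly requires simultaneous use of the existence part of Theorem~\ref{theorem:primitivefongchar} and the ``appears in no other $\B{\pi}$-character'' part of Theorem~\ref{theorem:fongchar}, and one must check that $\lin(H)$ is $\no{G}{H}$-stable and that distinct orbits really produce distinct $\B{\pi}$-characters. The only other delicate point is the coprimality argument forcing $\alpha = 1_W$ in (iv); everything else is a direct assembly of the results recalled above.
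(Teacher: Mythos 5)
Your argument is correct; every step checks out, including the coprimality observation that a linear $\pi$-special character of $W$ trivial on $H$ must be trivial, and the bijection in (ii) between $\irr_{\pi'}(G) \cap \B{\pi}(G)$ and $\no{G}{H}$-orbits on $\lin(H)$. Parts (ii) and (iii) are handled exactly as in the paper (Fong characters via Theorems~\ref{theorem:fongchar} and \ref{theorem:primitivefongchar}, respectively Corollary~\ref{corollary:wolf}), but you reverse the logical dependency between (i) and (iv): the paper proves (i) first by invoking Proposition~\ref{proposition:1} as a black box --- which reduces the hypothesis of (i) to $\irr_{\pi'}(G)=\lin(G)$ and then kills the linear characters via the factorization $\chi=\alpha\beta$ with $\alpha\in\irr_{\pi'}(G)\cap\X{\pi}(G)$ and $\beta\in\X{\pi'}(G)$ --- and then obtains (iv) as a purely formal consequence of (i), (ii) and (iii). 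You instead prove (iv) directly by rerunning the maximal-nucleus argument of Theorem~\ref{theorem:maximalnucleus} (essentially re-deriving the relevant special case of Proposition~\ref{proposition:1}, with the extra twist that $H=H'$ forces $\alpha=1_W$ outright) and then deduce (i) from (iv). The paper's route is shorter because it reuses Proposition~\ref{proposition:1}, which has already been established two results earlier; yours is more self-contained and makes explicit where the hypothesis $H=H'$ enters the nucleus argument, at the cost of duplicating work already done. Both organizations are sound, and your identification $\irr_{\pi'}(G)\cap\B{\pi'}(G)=\X{\pi'}(G)$ via the $\pi'$-version of Theorem~\ref{prop:bpiespec} is exactly the one the paper uses implicitly in the proof of Proposition~\ref{proposition:1}.
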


\begin{proof}
For point (i), only one direction is needed. Suppose, thus, that $\irr_{\pi'}(G) \cap \big(\B{\pi}(G) \cup \B{\pi'}(G) \big) = \{1_G\} \subseteq \lin(G)$, then by Proposition~\ref{proposition:1} $\irr_{\pi'}(G)=\lin(G)$. It follows that every character in $\irr_{\pi'}(G)$ can be factorized as a product $\alpha\beta$, with $\alpha \in \irr_{\pi'} \cap \X{\pi}(G)$ and $\beta \in \X{\pi'}(G)$; however, the two sets of characters both coincide with $\{1_G\}$ by hypothesis.

Point (ii) follows directly from Lemma~\ref{lemma:extensionoflinears}. In fact, if $\irr_{\pi'}(G) \cap \B{\pi}(G) = \{1_G\} \subseteq \lin(G)$, then every character in $\lin(H)$ extends to $G$ and, by \cite[Theorem F]{Is3}, it has an extension in $\irr_{\pi'}(G) \cap \B{\pi}(G)$, thus $\lin(H)=\{1_H\}$. On the other hand, if $\lin(H)=\{1_H\}$, then there are no nonprincipal Fong characters of $H$ in $G$ and it follows that $\abs{\irr_{\pi'}(G) \cap \B{\pi}(G)} = 1$ and the thesis follows.

Finally, point (iii) is a direct consequence of Corollary~\ref{corollary:wolf} and point (iv) follows from points (i), (ii) and (iii).
\end{proof}

Let us now proceed by proving Theorem~\ref{result:thompsonpipiprime}.

\begin{proposition}[\emph{Theorem~\ref{result:thompsonpipiprime}, a)}]
\label{proposition:2}
Let $G$ be a $\pi$-separable group and let $H$ be a Hall $\pi$-subgroup. Then, $\irr_{\pi'}(G) \cap \B{\pi}(G) \subseteq \lin(G)$ if and only if $G' \cap H = H'$.
\end{proposition}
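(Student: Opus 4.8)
The plan is to deduce this entirely from Lemma~\ref{lemma:extensionoflinears}, which already identifies the hypothesis $\irr_{\pi'}(G) \cap \B{\pi}(G) \subseteq \lin(G)$ with the assertion that every linear character of $H$ extends to $G$. So it suffices to prove the purely group-theoretic equivalence
\[
\text{every linear character of } H \text{ extends to } G \iff G' \cap H = H'.
\]

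First I would record the elementary fact that an irreducible extension of a linear character is itself linear, so $\lambda \in \lin(H)$ extends to $G$ precisely when $\lambda = \mu_H$ for some $\mu \in \irr(G/G') = \lin(G)$. Since $G/G'$ is abelian and $HG'/G' \cong H/(H \cap G')$ is a subgroup of it, restriction of characters from $G/G'$ to $HG'/G'$ is surjective; hence the linear characters of $H$ that extend to $G$ are exactly the characters of $H$ trivial on $H \cap G'$ (such a character is automatically trivial on $H'$, since $H' \leq H \cap G'$, hence linear). On the other hand, $\lin(H)$ consists of all characters of $H$ trivial on $H'$. Comparing these two descriptions, ``every linear character of $H$ extends to $G$'' is equivalent to ``every character of $H$ trivial on $H'$ is also trivial on $H \cap G'$'', and, since the common kernel of all characters of the abelian group $H/H'$ is trivial, this holds if and only if $(H \cap G')/H' = 1$, that is, $H \cap G' = H'$. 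This settles both directions simultaneously, and combining with Lemma~\ref{lemma:extensionoflinears} gives the proposition.

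I do not expect a genuine obstacle: the real content has been absorbed into Lemma~\ref{lemma:extensionoflinears} (and through it into Theorems~\ref{theorem:fongchar} and~\ref{theorem:primitivefongchar} together with \cite[Theorem F]{Is3}), while the step above is routine character theory of abelian groups. The one point to handle with a little care is the implication $G' \cap H = H' \Rightarrow$ every linear character of $H$ extends: there one uses that a character of a subgroup of an abelian group always extends to the whole group, applied to the inclusion $HG'/G' \leq G/G'$, and then inflates the extension back to $G$.
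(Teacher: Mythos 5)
Your proposal is correct and follows essentially the same route as the paper: both reduce the statement to ``every linear character of $H$ extends to $G$'' via Lemma~\ref{lemma:extensionoflinears} and then finish with elementary character theory of abelian groups. The only cosmetic difference is that the paper treats the two directions separately (a kernel-intersection computation one way, the decomposition $G/G' = HG'/G' \times KG'/G'$ the other), whereas you unify them through the surjectivity of restriction to subgroups of the abelian group $G/G'$; the mathematical content is the same.
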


\begin{proof}
From Lemma~\ref{lemma:extensionoflinears} we know that the property that every character in $\irr_{\pi'}(G) \cap \B{\pi}(G)$ is linear is equivalent to the fact that every character in $H/H'$ extends to $G$. Thus, suppose that for every $\lambda \in \irr(H/H')$ there exists $\chi \in \lin(G)$ such that $\chi_H=\lambda$. It follows that
$$ H' \leq G' \cap H \leq \bigcap_{\chi \in \lin(G)} \op{ker}(\chi_H) = \bigcap_{\lambda \in \lin(H)} \op{ker}(\lambda)=H' $$
and, therefore, $G' \cap H = H'$.

On the other hand, suppose $G' \cap H = H'$, then one can write
$$ \frac{G}{G'} = \frac{HG'}{G'} \times \frac{KG'}{G'} = \frac{H}{H'} \times \frac{KG'}{G'} $$
and, thus, every $\lambda \in \irr(H/H')$ extends to $\lambda \times 1_{KG'/G'} \in \irr(G/G')$.
\end{proof}

\begin{proposition}[\emph{Theorem~\ref{result:thompsonpipiprime}, b)}]
\label{proposition:3}
Let $G$ be a $\pi$-separable group, let $H$ be a Hall $\pi$-subgroup for $G$ and let $N=\no{G}{H}$. Then, $\X{\pi'}(G) \subseteq \lin(G)$ if and only if $G' \cap N \leq H$.
\end{proposition}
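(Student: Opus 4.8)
The plan is to characterize the condition $\X{\pi'}(G) \subseteq \lin(G)$ via Corollary~\ref{corollary:wolf}, which tells us that $\abs{\X{\pi'}(G)} = \abs{\irr(N/H)}$. Since the principal character is always $\pi'$-special, the condition $\X{\pi'}(G) \subseteq \lin(G)$ is equivalent to saying that the number of linear $\pi'$-special characters of $G$ equals $\abs{\X{\pi'}(G)}$, i.e. equals $\abs{\irr(N/H)} = \abs{N/H : (N/H)'} = \abs{N : H'[N,N]}$ (using $H' \leq N$ since $H \unlhd N$). So the strategy is to count linear $\pi'$-special characters of $G$ and compare with $\abs{N/H'[N,N]}$.

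First I would recall that a linear character of $G$ is $\pi'$-special precisely when it has $\pi'$-order, i.e. when it is trivial on a Hall $\pi$-subgroup $H$; indeed a linear character $\lambda$ is $\pi$-special iff $\lambda$ has $\pi$-order, and the $\pi$-special/$\pi'$-special factorization of a linear character is just its decomposition into $\pi$-part and $\pi'$-part. Thus $\X{\pi'}(G) \cap \lin(G) = \{\lambda \in \lin(G) : H \leq \op{ker}\lambda\} = \irr(G/HG')$, which has cardinality $\abs{G : HG'} = \abs{G : G'} / \abs{HG'/G'} = \abs{G/G'}/\abs{H/(H \cap G')}$. So the condition $\X{\pi'}(G) \subseteq \lin(G)$ becomes the numerical identity
$$ \abs{G : HG'} = \abs{N : H'[N,N]}. $$
Next I would analyze the right-hand side: writing $\abs{N : H'[N,N]}$ and using that $N = \no{G}{H} \supseteq H$, one has $N/H$ is a group (as $H \unlhd N$), and $\abs{N : H'[N,N]} = \abs{N/H : (N/H)'}$ counts the linear characters of $N$ trivial on $H$. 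By Proposition~\ref{proposition:2}-type reasoning (or directly), one expects $\abs{G : HG'}$ to always divide or relate to $\abs{N : H}$-data; the point will be to show the identity holds iff $G' \cap N \leq H$. I would try to establish the chain $HG' \cap N = H(G' \cap N)$ (a Dedekind-type modular-law argument, valid since $H \leq N$), giving $\abs{G':HG' \cap N \cap G'}$-type relations; more concretely, $\abs{N : HG' \cap N} = \abs{HN : HG'} $ is not quite right since $HN = N$, so actually $\abs{N : HG'\cap N} = \abs{HG'N : HG'} = \abs{G'N : G'}\cdot(\text{correction})$ — this is where care is needed.

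The cleanest route, which I would pursue as the main line, is: by Gallagher/counting, $\abs{\X{\pi'}(G)} = \abs{\irr(N/H)} = \abs{N : H[N,N]}$ and the number of \emph{linear} $\pi'$-special characters is $\abs{\irr(N/H) \text{ that come from } G} = \abs{G : HG'}$; one shows directly that the restriction map $\lin(G/HG') \to \lin(N/H)$ is injective with image exactly $\lin(N/H[N,N]) \cap (\text{those extending to } G)$, and that $\X{\pi'}(G) \subseteq \lin(G)$ forces every linear character of $N/H$ to extend to $G$, which by an argument parallel to Lemma~\ref{lemma:extensionoflinears} (now for $\pi'$-special characters and the normalizer) happens iff $G' \cap N \leq H$: indeed $\bigcap_{\chi \in \lin(G), H \leq \ker\chi} \ker(\chi_N) = H(G' \cap N)$ when all such characters restrict to all of $\lin(N/H)$, forcing $H(G'\cap N) = H$, i.e. $G' \cap N \leq H$; conversely if $G' \cap N \leq H$ then $N/H \cong NG'/HG'$ embeds as a direct factor of $G/HG'$ (using the structure of $G/G'$ and that $N \cap G' \leq H$ means $NG'/G' \cong N/H$), so every linear character of $N/H$ extends and the count matches. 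The main obstacle I anticipate is the converse direction: producing nonlinear $\pi'$-special characters of $G$ when $G' \cap N \not\leq H$ — this requires knowing that a non-extendible linear character of $N/H$ (equivalently, the failure of the counting identity) actually forces $\X{\pi'}(G)$ to contain a nonlinear member, which should follow by comparing $\abs{\X{\pi'}(G)} = \abs{\irr(N/H)}$ against the strictly smaller count $\abs{\lin(G) : (\text{those trivial on } H)}$ and invoking that every element of $\X{\pi'}(G)$ restricts to a sum of $\pi'$-special (hence, on the linear pieces, $N$-related) characters, so a deficit in linear $\pi'$-special characters is genuinely witnessed by a nonlinear one.
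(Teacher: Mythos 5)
Your overall strategy --- compare the number of linear $\pi'$-special characters of $G$, which is $\abs{G:HG'}$, against $\abs{\X{\pi'}(G)}=\abs{\irr(N/H)}$ from Corollary~\ref{corollary:wolf}, using injectivity of restriction to $N$ and the isomorphism $G/HG'\cong N/H$ when $G'\cap N\leq H$ --- is in substance the same counting argument the paper uses. But there is a genuine error at the base of your count: you assert $\abs{\irr(N/H)}=\abs{N/H:(N/H)'}$ (and at one point $\abs{N:H'[N,N]}$, which is not even the order of the abelianization of $N/H$, since $H'[N,N]=[N,N]$). The quantity $\abs{\irr(N/H)}$ in Corollary~\ref{corollary:wolf} is the number of conjugacy classes of $N/H$, and $N/H$ is only a $\pi'$-group, not abelian in general; the equality $\abs{\irr(N/H)}=\abs{\lin(N/H)}$ is precisely the kind of statement that must be derived rather than assumed. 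In the forward direction it does follow, but only after the restriction argument: each $\chi\in\X{\pi'}(G)$ is linear by hypothesis, $\chi_N$ lies in $\X{\pi'}(N)=\irr(N/H)$, and the map $\chi\mapsto\chi_N$ is injective because $\chi_N=\psi_N$ forces $N\leq\ker(\chi\bar{\psi})\unlhd G$ and hence $\ker(\chi\bar{\psi})=G$ by the Frattini argument --- a step you gesture at with ``one shows directly'' but never supply. Only after concluding that this map is a bijection onto $\irr(N/H)$ may you write $G'\cap N\leq\bigcap_{\lambda\in\irr(N/H)}\ker\lambda=H$. Note the intersection must run over all of $\irr(N/H)$: your version, taken over $\lin(N/H)$, only yields $H(G'\cap N)\leq HN'$, which gives nothing unless you already know $N'\leq H$.

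In the converse direction the same gap closes for a different reason: $G'\cap N\leq H$ forces $N'\leq H$, so $N/H$ is abelian and $\abs{\irr(N/H)}=\abs{N:H}$; combined with $G=NG'$ (Frattini again) and $N\cap HG'=H(N\cap G')=H$ (Dedekind), this gives $\abs{G:HG'}=\abs{N:H}=\abs{\X{\pi'}(G)}$, so every $\pi'$-special character is linear. Your sketch here (``$N/H$ embeds as a direct factor of $G/HG'$'') is imprecise --- $NG'/HG'$ is all of $G/HG'$ --- and your middle paragraph of modular-law manipulations, which you yourself flag as ``not quite right,'' should be discarded: it is not needed once the two counts are set up correctly. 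With these repairs your argument coincides with the paper's proof, which organizes the same ingredients as a bijection $\X{\pi'}(G)\to\X{\pi'}(N)$ in one direction and the decomposition $G/G'\cong N/(G'\cap N)=X\times H/(G'\cap H)$ in the other.
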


\begin{proof}
Assume at first that $\X{\pi'}(G) \subseteq \lin(G)$, therefore if $\chi \in \X{\pi'}(G)$, then $\chi_N$ is linear. Suppose that, for some $\chi,\psi \in \X{\pi'}(G)$, $\chi_N=\psi_N$; then $N \leq \op{ker}(\chi\bar{\psi}) \lhd G$. It follows that $\op{ker}(\chi\bar{\psi}) = G$, by Frattini argument, and thus $\chi = \psi$. Therefore, the restriction realizes an injection from $\X{\pi'}(G)$ to $\X{\pi'}(N)$ and, since $\abs{\X{\pi}(G)}=\abs{\X{\pi}(N)}$, by Corollary~\ref{corollary:wolf}, it is actually a bijection. It follows that every character in $\irr(N/H)$ is the restriction of a linear character of $G$, thus we have that
$$ G' \cap N \leq \bigcap_{\chi \in \lin(G)} \op{ker}(\chi_N) \leq \bigcap_{\lambda \in \irr(N/H)} \op{ker}(\lambda) = H. $$

On the other hand, suppose that $G' \cap N \leq H$. Let $X$ be a complement for $H$ in $N$ and note that $X$ is abelian. Moreover, note that $NG'$ is normal in $G$ and it contains $N$, thus $G=NG'$ for the Frattini's argument. It follows that
$$ \frac{G}{G'} \cong \frac{N}{G' \cap N} = X \times \frac{H}{G' \cap H} $$
and, thus, there is a bijection between characters in $\irr(X)=\irr(N/H)$ and characters in $\X{\pi'}(G/G')$. However, by Corollary~\ref{corollary:wolf} we have that $\abs{\X{\pi'}(G/G')} = \abs{\irr(N/H)} = \abs{\X{\pi'}(G)}$ and, thus, it follows that every $\pi'$-special character in $G$ is linear.
\end{proof}

As anticipated in the introduction, it can be easily seen that \cite[Corollary 3]{NW} can also be obtained as a corollary of these last results.

\begin{corollary}[\emph{\cite[Corollary 3]{NW}}]
Let $G$ be a $\pi$-separable group and let $H$ be a Hall $\pi$-subgroup for $G$ and $N=\no{G}{H}$. Then, $\irr_{\pi'}(G)=\lin(G)$ if and only if $G' \cap N = H'$.
\end{corollary}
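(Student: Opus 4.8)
The plan is to obtain this statement as a bookkeeping consequence of Propositions~\ref{proposition:1}, \ref{proposition:2} and \ref{proposition:3}, together with a short, purely group-theoretic manipulation inside the lattice of subgroups of $G$; all of the real work has already been done in those three propositions, so the proof should be essentially a chain of equivalences.

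First I would record the identification $\irr_{\pi'}(G)\cap\B{\pi'}(G)=\X{\pi'}(G)$: by the $\pi'$-version of Theorem~\ref{prop:bpiespec}, a character in $\B{\pi'}(G)$ has $\pi'$-degree precisely when it is $\pi'$-special, and every $\pi'$-special character lies in $\B{\pi'}(G)$. Consequently
\[
\irr_{\pi'}(G)\cap\bigl(\B{\pi}(G)\cup\B{\pi'}(G)\bigr)\subseteq\lin(G)
\quad\Longleftrightarrow\quad
\irr_{\pi'}(G)\cap\B{\pi}(G)\subseteq\lin(G)\ \text{ and }\ \X{\pi'}(G)\subseteq\lin(G).
\]
By Proposition~\ref{proposition:1}, the left-hand condition is equivalent to $\irr_{\pi'}(G)=\lin(G)$; by Propositions~\ref{proposition:2} and \ref{proposition:3}, the two conditions on the right are equivalent to $G'\cap H=H'$ and $G'\cap N\le H$, respectively. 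Thus everything reduces to checking that $\bigl(G'\cap H=H'$ and $G'\cap N\le H\bigr)$ holds if and only if $G'\cap N=H'$.

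This last equivalence I would verify by hand, using only that $H\le N$ (so $N\cap H=H$) and $H'\le H$: if $G'\cap H=H'$ and $G'\cap N\le H$, then $G'\cap N=(G'\cap N)\cap H=G'\cap H=H'$; conversely, if $G'\cap N=H'$, then $G'\cap N=H'\le H$ gives the inclusion, and $G'\cap H=(G'\cap N)\cap H=H'\cap H=H'$. I do not anticipate any genuine obstacle: the only point requiring a moment of attention is the translation $\irr_{\pi'}(G)\cap\B{\pi'}(G)=\X{\pi'}(G)$, and after that the corollary is a one-line computation. (One could of course cite \cite[Corollary 3]{NW} directly, but the whole point of the preceding remark is that the route through Propositions~\ref{proposition:1}--\ref{proposition:3} makes that unnecessary.)
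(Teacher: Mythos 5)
Your proposal is correct and follows exactly the paper's route: it chains Propositions~\ref{proposition:1}, \ref{proposition:2} and \ref{proposition:3} and then checks the elementary equivalence between ($G'\cap H=H'$ and $G'\cap N\le H$) and $G'\cap N=H'$, which is precisely what the paper's one-line proof does. The only difference is that you make explicit the identification $\irr_{\pi'}(G)\cap\B{\pi'}(G)=\X{\pi'}(G)$, which the paper uses implicitly (it is stated in the proof of Proposition~\ref{proposition:1}), so this is a faithful, slightly more detailed version of the same argument.
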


\begin{proof}
If $N$ is the normalizer in $G$ of a Hall $\pi$-subgroup $H$, Propositions \ref{proposition:1}, \ref{proposition:2} and \ref{proposition:3} provide that $\irr_{\pi'}(G)=\lin(G)$ if and only if both $G' \cap N \leq H$ and $G' \cap H = H'$ and the thesis follows directly from this.
\end{proof}

\end{document}